\definecolor{webgreen}{rgb}{0,.5,0}
\definecolor{webbrown}{rgb}{.6,0,0}
\DeclareMathOperator{\NR}{NR}
\DeclareMathOperator{\NC}{NC}
\theoremstyle{plain}
\newtheorem{thm}{Theorem}[section]
\newtheorem{lem}[thm]{Lemma}
\theoremstyle{definition}
\newtheorem{defi}[thm]{Definition}
\newtheorem{conj}[thm]{Conjecture}
\def\modd#1 #2{#1\ \mbox{\rm (mod}\ #2\mbox{\rm )}}
\begin{document}

\begin{center}
\vskip 1cm{\LARGE\bf
Modeling Random Walks to Infinity on Primes in $\mathbb{Z}[\sqrt{2}]$
}
\vskip 1cm
\large
Bencheng Li\\
Department of Mathematics\\
University of Michigan\\
Ann Arbor, MI 48105\\
USA \\
\href{mailto:benchenl@umich.edu}{\tt benchenl@umich.edu} \\
\ \\
Steven J. Miller\\
Department of Mathematics and Statistics\\
Williams College\\
Williamstown, MA 01267\\
USA \\
\href{mailto:sjm1@williams.edu}{\tt sjm1@williams.edu} \\
\ \\
Tudor Popescu\\
Department of Mathematics\\
Brandeis University\\
Waltham, MA 02453\\
USA \\
\href{mailto:tudorpopescu@brandeis.edu}{\tt tudorpopescu@brandeis.edu} \\
\ \\
Daniel Sarnecki\\
Department of Mathematics\\
Cornell University\\
Ithaca, NY 14850\\
USA \\
\href{mailto:dbs263@cornell.edu}{\tt dbs263@cornell.edu} \\
\ \\
Nawapan Wattanawanichkul\\
Department of Mathematics\\
University of Illinois Urbana-Champaign\\
Urbana, IL 61801\\
USA\\
\href{mailto:nawapan2@illinois.edu}{\tt nawapan2@illinois.edu} \\
\ \\
\end{center}

\vskip .2 in

\begin{abstract}
An interesting question, known as the Gaussian moat problem, asks whether it is possible to walk to infinity on Gaussian primes with steps of bounded length. Our work examines a similar situation in the real quadratic integer ring $\mathbb{Z}[\sqrt{2}]$ whose primes cluster near the asymptotes $y = \pm x/\sqrt{2}$ as compared to Gaussian primes, which cluster near the origin. We construct a probabilistic model of primes in $\mathbb{Z}[\sqrt{2}]$ by applying the prime number theorem and a combinatorial theorem for counting the number of lattice points whose absolute values of their norms are at most $r^2$. We then prove that it is impossible to walk to infinity if the walk remains within some bounded distance from the asymptotes. Lastly, we perform a few moat calculations to show that the longest walk is likely to stay close to the asymptotes; hence, we conjecture that there is no walk to infinity on $\mathbb{Z}[\sqrt{2}]$ primes with steps of bounded length.
\end{abstract}

\section{Introduction}\label{sec:introduction}

\subsection{Background}\label{subsec:background} It is known that one cannot walk to infinity along real prime numbers with steps of bounded length, i.e., there is no finite number $N$ such that there exists an infinite sequence of increasing real primes $p_1, p_2, \ldots$ where $p_{i+1}-p_{i}\leq N$ for any $i\in \mathbb{N}$. One can prove this by using the \textit{primorial} $p\#$, which is the product of all primes $p_i$ less than or equal to $p$, i.e.,
$$p\# :=  \prod_{p_i  \text{ prime, } p_i \leq p} p_i.$$
For any real prime $p$, consider the sequence $$(p\#+j)_{j=2, 3, \ldots, p}.$$
All numbers in this sequence are composite, so we have a gap of length at least $p-1$ between $p$ and the next prime. Thus, it is impossible to walk to infinity along primes taking steps of bounded length because there are infinitely many primes. 

This classical problem is clearly one-dimensional as we only need to find an arbitrarily large gap on the number line. However, one can ask the more flavorful question of whether there exists a prime walk to infinity in \textit{two dimensions}, for example, in the integer ring of a quadratic field \cite[p.\ 368]{D1994}.

\begin{defi}\label{def:rings}
For a square-free integer $d$, the \textit{quadratic integer ring of a quadratic field} $\mathbb{Q}(\sqrt{d})$, denoted by $\mathbb{Z}[\sqrt{d}]$, is the set
\begin{enumerate}[(i)]
\item $\{a + b\left(\frac{1 + \sqrt{d}}{2}\right) \mid a, b \in \mathbb{Z}\}$ if $d \equiv 1$ (mod $4$), \text{or}
\item $\{a + b\sqrt{d} \mid a, b \in \mathbb{Z}\}$ if $d \equiv 2,3$ (mod $4$).
\end{enumerate}
\end{defi}
We note that for any $d$, the \textit{norm} of an element $a+b\sqrt{d}$ in $\mathbb{Z}[\sqrt{d}]$ is defined as $a^{2} - b^2d$. We call an element in $\mathbb{Z}[\sqrt{d}]$ a \textit{unit} if its norm is $\pm 1$, and two numbers in $\mathbb{Z}[\sqrt{d}]$ are \textit{associates} if one is a multiple of the other by a unit. Lastly, in a unique factorization domain (UFD), for example, $\mathbb{Z}[i]$ and  $\mathbb{Z}[\sqrt{2}]$,  a \textit{prime} is defined as a non-zero, non-unit number that is divisible only by its associates and the units of such UFD \cite[p.\ 268]{HW}. 

In particular, the \textit{ring of Gaussian integers} $\mathbb{Z}[i] := \{a+bi \mid a,b\in \mathbb{Z}\}$ is a special case of quadratic integer rings when $d = -1$, and primes in $\mathbb{Z}[i]$ are simply called \textit{Gaussian primes}. With the notion of primes established, one can ask the earlier question regarding the existence of a prime walk to infinity in this setting. This problem is also known as the \textit{Gaussian moat problem}. 

\subsection{Gaussian moat problem and our motivation}\label{subsec:quadraticring}

According to Guy \cite[Problem A.16]{G} and Gethner and Stark \cite{GS}, Basil Gordon first posed the \textit{Gaussian moat problem} in 1962, asking whether one can walk to infinity starting at the origin, and thereafter stepping only on primes in $\mathbb{Z}[i]$ with steps of bounded length. Equivalently, the problem discusses the existence of a \textit{$k$-moat} for any finite real $k$, where a $k$-moat is a region of composite Gaussian integers that encloses the origin with width at least $k$. A more general version of this problem also considers any starting point besides the origin \cite{GS}.

\begin{figure}[ht]
\centering
\begin{subfigure}[t]{0.38\textwidth}
  \centering
  \includegraphics[width=\linewidth]{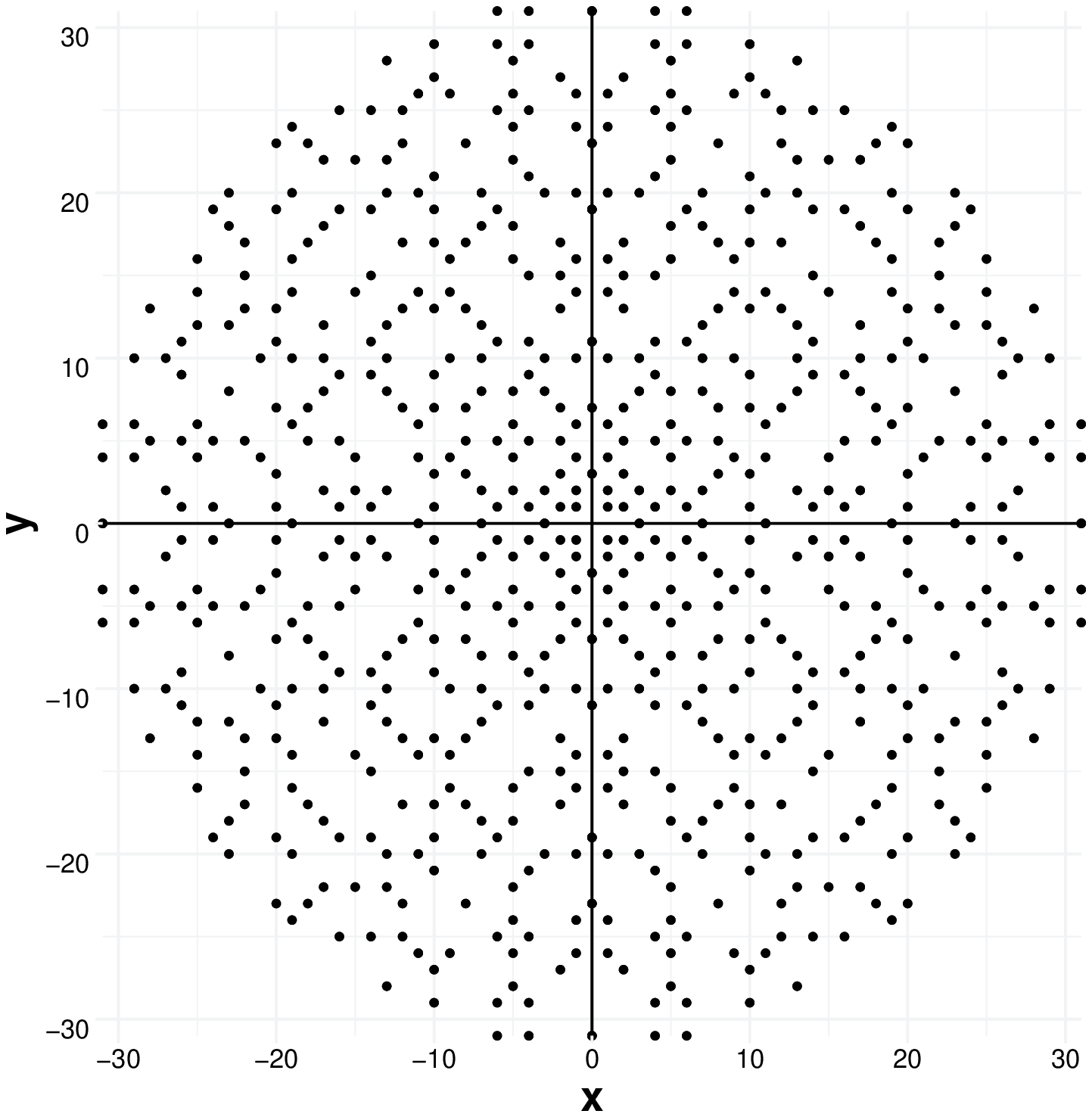}
  \caption{}
  \label{fig:Gplot30}
 \end{subfigure}
\begin{subfigure}[t]{0.38\textwidth}
  \centering
  \includegraphics[width=\linewidth]{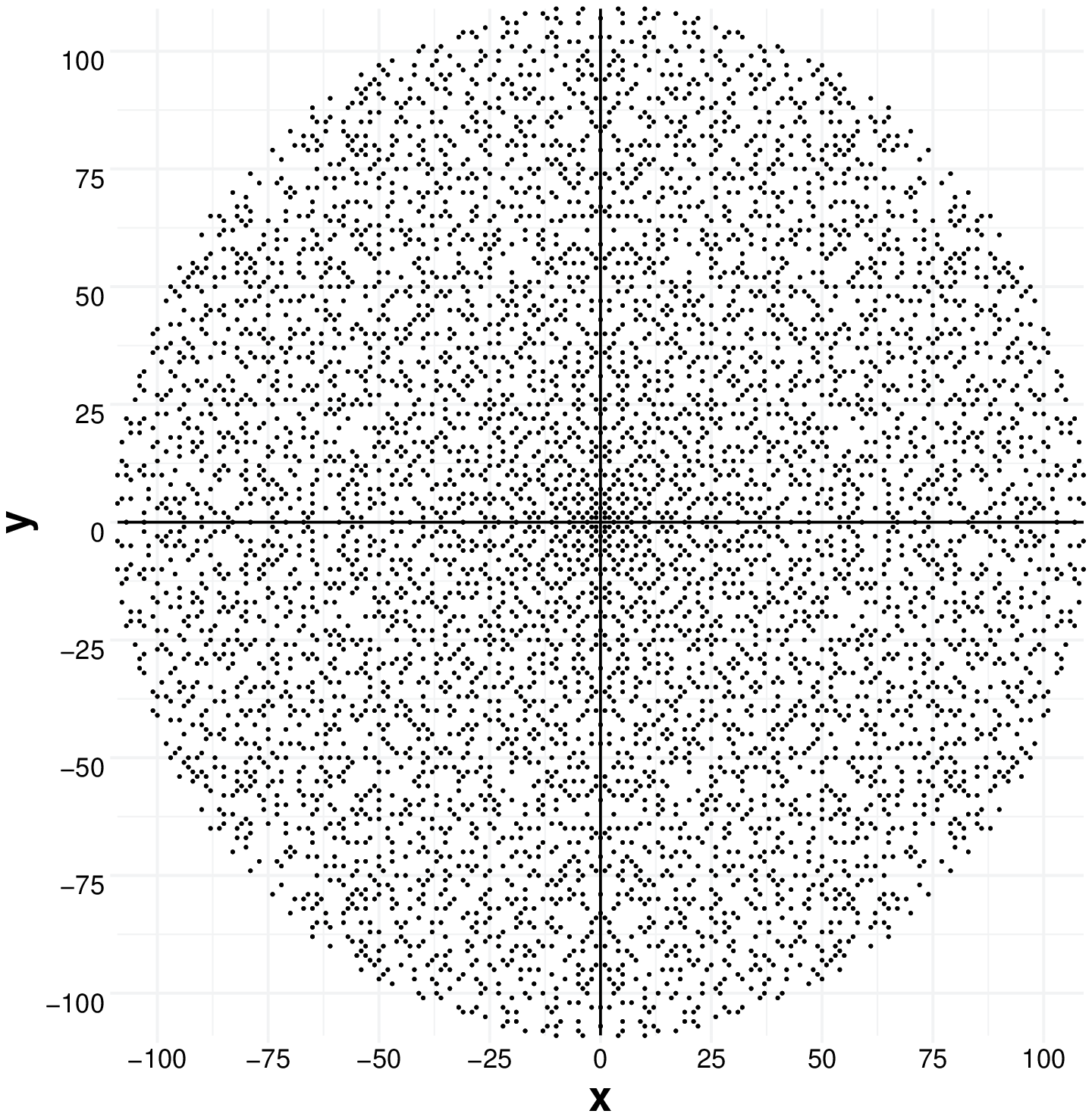}
  \caption{}
  \label{fig:Gplot100}
\end{subfigure}
\caption{Gaussian primes in disks of radius $30$ (Figure \ref{fig:Gplot30}) and $100$ (Figure \ref{fig:Gplot100}).}
\end{figure}

To simplify the Gaussian moat problem, one can exploit the \textit{$8$-fold symmetry} of the geometric structure of Gaussian primes as manifested in Figures \ref{fig:Gplot30} and \ref{fig:Gplot100}. Such symmetry can be explained by the following characterization of Gaussian primes \cite[p.\ 378]{D1994}.

\begin{thm}\label{thm:Gaussianprime}
An element $a+bi \in \mathbb{Z}[i]$ is a Gaussian prime if and only if the element satisfies one of the following requirements up to associates:
\begin{enumerate}[(i)]
    \item\label{thm:Gauss1} $a+bi$ is $1+i$,
    \item\label{thm:Gauss2} $a+bi$ is such that $a^2+b^2$ is a real prime, and $a^2+b^2 \equiv \modd {1} {4}$, or
    \item\label{thm:Gauss3} $a+bi$ is such that $b = 0$, $|a|$ is a real prime, and $a \equiv \modd {3} {4}$.
\end{enumerate}
\end{thm}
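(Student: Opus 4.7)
The plan is to classify Gaussian primes by the rational prime they lie over, using the fact that $\mathbb{Z}[i]$ is a Euclidean domain (hence a UFD) with respect to the norm $N(a+bi)=a^2+b^2$, which is multiplicative. Two preliminary observations drive everything: first, if $\pi\in\mathbb{Z}[i]$ is a Gaussian prime, then $\pi\mid\pi\bar\pi=N(\pi)\in\mathbb{Z}_{>0}$, and factoring $N(\pi)$ into rational primes and invoking the primality of $\pi$ shows $\pi\mid p$ for some rational prime $p$; taking norms in $\pi\mid p$ yields $N(\pi)\mid p^2$, so $N(\pi)\in\{p,p^2\}$. Second, if $N(\pi)=p$ is a rational prime, then any factorization $\pi=\alpha\beta$ forces $N(\alpha)=1$ or $N(\beta)=1$ by multiplicativity of the norm, so one factor is a unit and $\pi$ is automatically a Gaussian prime.

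With these in hand, I would split into cases by $p\bmod 4$. For $p=2$, a direct computation $2=-i(1+i)^2$ shows $1+i$ is the unique Gaussian prime above $2$ up to associates, giving (i). For $p\equiv 1\pmod 4$, Fermat's two-squares theorem gives integers $a,b$ with $p=a^2+b^2=(a+bi)(a-bi)$; each factor has norm $p$, hence is a Gaussian prime by the preliminary observation, yielding (ii). For $p\equiv 3\pmod 4$, a mod $4$ argument shows no sum of two integer squares is $\equiv 3\pmod 4$ (since squares are $0$ or $1$ mod $4$), so $p$ admits no element of norm $p$ in $\mathbb{Z}[i]$; combined with $N(\pi)\in\{p,p^2\}$ for any prime divisor $\pi$ of $p$, this forces $p$ itself to be a Gaussian prime, giving (iii).

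To finish the ``only if'' direction I would argue that every Gaussian prime appears in one of these three families: if $\pi\mid p$ with $N(\pi)=p$, then writing $\pi=a+bi$ gives $a^2+b^2=p$ prime, and the mod $4$ observation above forces $p=2$ (case (i)) or $p\equiv 1\pmod 4$ (case (ii)); if instead $N(\pi)=p^2$, then $p/\pi$ has norm $1$, so $\pi$ is an associate of $p$ itself, which by the previous case is only prime when $p\equiv 3\pmod 4$ (case (iii)). The ``if'' direction in each case is immediate from the norm-prime criterion, together with the explicit factorization $2=-i(1+i)^2$ for case (i).

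The main obstacle is the case $p\equiv 1\pmod 4$, where one needs Fermat's two-squares theorem to actually produce the splitting $p=(a+bi)(a-bi)$; every other step is essentially bookkeeping about the multiplicative norm. I would simply invoke the classical proof (e.g., via Thue's lemma applied to a square root of $-1$ modulo $p$, whose existence follows from $p\equiv 1\pmod 4$ and the cyclicity of $(\mathbb{Z}/p\mathbb{Z})^\times$) rather than reprove it here.
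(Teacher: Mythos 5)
The paper does not actually prove this theorem: it is quoted as a known characterization with a citation to Dekker, so there is no in-paper argument to compare against. Your proposal is a correct and complete proof, and it is the standard one: reduce to the rational prime $p$ lying under a Gaussian prime $\pi$ via $\pi \mid N(\pi)$ and the dichotomy $N(\pi) \in \{p, p^2\}$, then split on $p \bmod 4$, with Fermat's two-squares theorem (equivalently, the existence of a square root of $-1$ modulo $p \equiv 1 \pmod 4$ plus Thue's lemma) as the only nontrivial input. All the individual steps check out: the norm-$p$ criterion for primality, the ramified computation $2 = -i(1+i)^2$, the mod $4$ obstruction to $p \equiv 3$ being a sum of two squares, and the deduction that $N(\pi) = p^2$ forces $\pi$ to be an associate of $p$. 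It is worth noting that your route is consonant with the rest of the paper, which proves the analogous representability statement for $\mathbb{Z}[\sqrt{2}]$ (Theorem 1.6) by exactly this Thue's-lemma mechanism; your proof of the $p \equiv 1 \pmod 4$ case is the $d = -1$ analogue of that argument. The only cosmetic caveat is that the theorem's clause (iii) as stated literally requires $a \equiv 3 \pmod 4$ rather than $|a| \equiv 3 \pmod 4$, which is harmless only because the statement is ``up to associates''; your proof correctly produces the associate class of $p$ for $p \equiv 3 \pmod 4$, which is all that is claimed.
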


Since the norm of $a+bi$ is defined as $a^2+b^2$, any element $a+bi$ is a unit if $a^2+b^2 = 1$. This gives us exactly four units in $\mathbb{Z}[i]$, namely, $\pm 1$ and $\pm i$. Thus, for each prime in $\mathbb{Z}[i]$, its other three associates are also primes. Moreover, it is known that any real prime $p$ congruent to 1 modulo 4 can be expressed as a unique sum $a^2+b^2$ \cite[p.\ 284]{HW}. Thus, by Theorem \ref{thm:Gaussianprime} \ref{thm:Gauss2}, each real prime $p \equiv 1$ (mod $4$) yields eight Gaussian primes: $a+bi$, $a-bi$, and their six associates, which implies the $8$-fold symmetry. Then one can reduce the problem to finding a strip of some width at least $k$ sweeping across the first octant. 

Much progress towards the Gaussian moat problem has been made over decades. Jordan and Rabung \cite{JR}, in 1970, constructed a $\sqrt{10}$-moat when starting at the origin while Gethner et al.\ \cite{GWW} proved the existence of $4$, $\sqrt{18}$, and $\sqrt{26}$-moats in 1998. The latter result implies that steps of length 5 are not sufficient for a walk to infinity. Moreover, in 2004, Tsuchimura \cite{T} showed the existence of a $6$-moat. While the problem remains open, there is a strong inclination from the literature \cite{GS, GWW} that the answer to the Gaussian moat problem is \textit{no}.

As mentioned earlier, the Gaussian moat problem is just a variation of the prime walk to infinity problem. Hence, in this paper, we would like to shift our focus to another quadratic ring of integers, namely, $\mathbb{Z}[\sqrt{2}]$. In other words, we attempt to answer \textit{whether it is possible to walk to infinity on primes in $\mathbb{Z}[\sqrt{2}]$ with steps of bounded length}. To begin with, we recall the following characterization of the primes in $\mathbb{Z}[\sqrt{2}]$ \cite[p.\ 378]{D1994}.

\begin{thm}\label{thm:sqrt2primes}
An element $a+b\sqrt{2}\in \mathbb{Z}[\sqrt{2}]$ is a prime if and only if the element satisfies one of the following requirements up to associates:
\begin{enumerate}[(i)]
    \item \label{thm:form1} $a+b\sqrt{2}$ is $\sqrt{2}$,
    \item \label{thm:form2} $a+b\sqrt{2}$ is such that $|a^2-2b^2|$ is a real prime,  and $a^2-2b^2 \equiv \modd{1, 7} {8}$, or
    \item \label{thm:form3} $a+b\sqrt{2}$ is such that $b=0$, $a$ is a real prime, and $a \equiv \modd {3, 5} {8}$.
\end{enumerate}
\end{thm}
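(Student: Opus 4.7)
The plan is to use that $\mathbb{Z}[\sqrt{2}]$ is a Euclidean domain (with Euclidean size function $|N(a+b\sqrt{2})|=|a^2-2b^2|$) and hence a UFD in which primes coincide with irreducibles; here $N$ is the norm defined in the introduction and is multiplicative, $N(\alpha\beta) = N(\alpha)N(\beta)$. The first reduction is the observation that every prime $\pi \in \mathbb{Z}[\sqrt{2}]$ divides a unique rational prime: indeed $\pi \mid \pi\bar\pi = N(\pi) \in \mathbb{Z}$, so factoring $|N(\pi)|$ over $\mathbb{Z}$ shows that $\pi$ must divide some rational prime $p$, which is unique because $\pi$ cannot divide two coprime integers. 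It therefore suffices to describe, for each rational prime $p$, which elements of $\mathbb{Z}[\sqrt{2}]$ dividing $p$ are themselves prime.

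I would then split into cases according to $p \bmod 8$. For $p=2$, the identity $2 = \sqrt{2}\cdot\sqrt{2}$ together with $|N(\sqrt{2})|=2$ forces $\sqrt{2}$ to be irreducible, and every prime dividing $2$ is an associate of $\sqrt{2}$; this is case (i). For odd $p$, the splitting of $p$ in $\mathbb{Z}[\sqrt{2}]$ is governed by whether $2$ is a quadratic residue mod $p$; by the classical supplement to quadratic reciprocity, $\left(\tfrac{2}{p}\right)=1$ iff $p \equiv \pm 1 \pmod 8$. If $p \equiv 1,7 \pmod 8$, pick $c$ with $c^2 \equiv 2 \pmod p$; then $p \mid (c-\sqrt{2})(c+\sqrt{2})$ but $p$ divides neither factor in $\mathbb{Z}[\sqrt{2}]$, so $p$ is reducible and factors as $p = \pi\bar\pi$ with $|N(\pi)|=p$, producing primes of the form in (ii). If $p\equiv 3,5 \pmod 8$, any nontrivial factorization $p=\alpha\beta$ in $\mathbb{Z}[\sqrt{2}]$ would force $|N(\alpha)|=|N(\beta)|=p$, i.e.\ $a^2-2b^2 = \pm p$, making $2$ a quadratic residue mod $p$, a contradiction; hence $p$ itself is prime in $\mathbb{Z}[\sqrt{2}]$, giving case (iii).

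Combining the cases yields the forward direction: every prime $\pi \in \mathbb{Z}[\sqrt{2}]$ is an associate of $\sqrt{2}$, of an element whose $|N|$ equals a rational prime $p\equiv 1,7\pmod 8$, or of a rational prime $p\equiv 3,5 \pmod 8$. The converse is easy: an element of prime norm is automatically irreducible by multiplicativity of $N$, and the case (iii) element was shown to be prime in the analysis above. Finally, the congruence condition $a^2-2b^2\equiv 1,7 \pmod 8$ in case (ii) is a short parity check: reducing $a^2-2b^2 \pmod 8$ in the four parity classes of $(a,b)$ shows the only odd residues that occur are $1$ and $7$, so the congruence is automatic whenever $|a^2-2b^2|$ is an odd prime; it is listed explicitly in order to exclude $|a^2-2b^2|=2$, which is already covered by case (i). I expect the main obstacle in a careful write-up to be the clean appeal to the quadratic character of $2$ modulo $p$ and the careful sign-handling of the (possibly negative) norm; the rest is routine norm bookkeeping.
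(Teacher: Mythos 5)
Your argument is correct, but there is nothing in the paper to compare it against: Theorem~\ref{thm:sqrt2primes} is quoted from Dekker's survey and the paper supplies no proof of it. Your route --- reduce to the rational prime $p$ that $\pi$ divides, split on $p=2$ versus $\left(\tfrac{2}{p}\right)=\pm1$ via the second supplement to quadratic reciprocity, and use multiplicativity of the norm in the Euclidean domain $\mathbb{Z}[\sqrt{2}]$ --- is the standard one, and the details you flag (that $p\nmid c\pm\sqrt{2}$, that $p\mid b$ is impossible when $a^2-2b^2=\pm p$, and the mod-$8$ parity check showing the congruence in (ii) is automatic for odd norms) are exactly the points a careful write-up needs; none of them fails. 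The only cosmetic remark is that your case (ii) produces an element of norm $\pm p$ rather than a canonical representative, which matches the theorem's ``up to associates'' phrasing, so the sketch is complete as stated.
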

It is known that $1+\sqrt{2}$ is the \textit{fundamental unit of $\mathbb{Z}[\sqrt{2}]$}, meaning that all units of $\mathbb{Z}[\sqrt{2}]$ are of the form $\pm(1+\sqrt{2})^k$, where $k \in \mathbb{Z}$ \cite[p.\ 270]{HW}. Thus, for each prime $a+b\sqrt{2} \in \mathbb{Z}[\sqrt{2}]$, its associates $\pm(a+b\sqrt{2})(1+\sqrt{2})^k$ are also primes. In addition, using Theorem \ref{thm:sqrt2primes} \ref{thm:form2}, one can show that every prime number $p \equiv 1, 7$ (mod $8$) can be expressed as $a^2 - 2b^2$, for some $a, b \in \mathbb{Z}$ by using Thue's lemma \cite[p.\ 43]{S}.

\begin{lem}[Thue's lemma]\label{lem:thue's} Let $a, n > 1$ be integers with $\gcd(a, n) = 1$. Then there exist $x, y \in \mathbb{Z}$ with $$0 < |x|, |y| < \sqrt{n}$$ and $$x \equiv ay \pmod n.$$
\end{lem}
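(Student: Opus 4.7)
The plan is to prove Thue's lemma by a pigeonhole argument on pairs of small nonnegative integers, then extract $x, y$ as differences. The core observation is that the number of pairs $(u,v)$ with $0 \le u, v \le \lfloor \sqrt{n}\rfloor$ is $(\lfloor\sqrt{n}\rfloor+1)^2$, which strictly exceeds $n$, so the map $(u,v) \mapsto u - av \pmod n$ cannot be injective.

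First I would fix the parameter $m := \lfloor \sqrt{n}\rfloor$ and consider the finite set $S := \{(u,v) : 0 \le u, v \le m\}$. A quick count gives $|S| = (m+1)^2 > n$, using the inequality $m+1 > \sqrt{n}$ from the definition of the floor. Next I would evaluate the function $f \colon S \to \mathbb{Z}/n\mathbb{Z}$ defined by $f(u,v) := u - av \pmod n$; by pigeonhole, there exist two distinct pairs $(u_1, v_1) \neq (u_2, v_2)$ in $S$ with $f(u_1, v_1) = f(u_2, v_2)$. Setting $x := u_1 - u_2$ and $y := v_1 - v_2$ yields $x \equiv ay \pmod n$ directly, together with the size bound $|x|, |y| \le m < \sqrt{n}$.

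The one subtle step, and the main place where the hypothesis $\gcd(a,n) = 1$ is used, is verifying that both $x$ and $y$ are nonzero. Suppose $y = 0$; then the congruence forces $x \equiv 0 \pmod n$, and combined with $|x| < \sqrt{n} \le n$ (valid since $n > 1$) this gives $x = 0$, contradicting distinctness of the pairs. Symmetrically, if $x = 0$, then $n \mid ay$, and coprimality $\gcd(a,n)=1$ gives $n \mid y$; the same size bound $|y| < \sqrt{n} \le n$ then forces $y = 0$, again a contradiction. Hence $0 < |x|, |y| < \sqrt{n}$, as required.

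I do not anticipate a serious obstacle: the argument is essentially a textbook pigeonhole proof, and the only point requiring care is the nondegeneracy check in the previous paragraph, where one must invoke coprimality exactly once. The only bookkeeping issue is ensuring the strict inequality $|x|, |y| < \sqrt{n}$ (rather than $\le$), which follows because $m = \lfloor\sqrt{n}\rfloor < \sqrt{n}$ whenever $n$ is not a perfect square, and because when $n$ is a perfect square one still has $|x|, |y| \le m = \sqrt{n}$ and equality would force $\{u_i, u_j\} = \{0, m\}$, a scenario easily absorbed by noting $(m+1)^2 > n$ leaves room to shift indices; alternatively, one may simply assert $|x|, |y| \le m$ and then observe $m < \sqrt{n}$ unless $n$ is a square, in which case a direct inspection completes the argument.
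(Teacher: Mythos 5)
Your proof is correct and follows essentially the same pigeonhole argument as the paper: count the $(\lfloor\sqrt{n}\rfloor+1)^2 > n$ pairs, collide two of them under $(u,v)\mapsto u-av \pmod n$, take differences, and use $\gcd(a,n)=1$ to rule out degeneracy. You are in fact slightly more careful than the paper on two points --- you check both directions of the nondegeneracy implication, and you flag that the strict bound $|x|,|y|<\sqrt{n}$ only follows immediately when $n$ is not a perfect square (the paper asserts $0<|x|,|y|<r$ where only $\le r$ is justified); since the lemma is applied only with $n=p$ prime, this boundary case never arises in the paper.
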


\begin{proof}
Let $r = \lfloor \sqrt{n} \rfloor$. Then $r^2 \le n < (r+1)^2$. Note that the number of pairs $(x, y)$ with $0 \le x, y \le r$ is $(r + 1)^2 > n$. Therefore, by the pigeonhole principle, there exist $(x_1, y_1)$ and $(x_2, y_2)$ as above such that $$x_1 - ay_1 \equiv x_2 - ay_2 \pmod n.$$

In particular, we obtain that $x_1 - x_2 \equiv a(y_1 - y_2)$ (mod $n$). Setting $x = x_1 - x_2$ and $y = y_1 - y_2$, we get the condition in the above statement, and we only have to prove that $0 < |x|, |y| < \sqrt{n}$.

Since $\gcd(a, n) = 1$, we have that if $x = 0$, then $y = 0$. However, if $x = y = 0$, then $(x_1, y_1) = (x_2, y_2)$, contradicting the fact that they are different pairs. Since $0 \le x_i, y_i \le r$ for $i \in \{1,2\}$, we conclude that $0 < |x|, |y| < r \le \sqrt{n}$ and $x \equiv ay$ (mod $n$).
\end{proof}

With the above lemma established, we now obtain the following theorem.
\begin{thm}
Let $p$ be a prime number. Then $p$ can be written as $a^2 - 2b^2, a, b \in \mathbb{Z}$ if and only if $p \equiv \modd {1, 7} {8}$.
\end{thm}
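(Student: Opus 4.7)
The plan is to handle the two directions separately, treating the forward direction by a short mod~$8$ calculation and the backward direction via Thue's lemma combined with the second supplementary law of quadratic reciprocity. I will implicitly assume $p$ is odd throughout (the case $p=2$ is handled separately by part (\ref{thm:form1}) of Theorem~\ref{thm:sqrt2primes}).

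For the forward direction, suppose $p = a^2 - 2b^2$ with $a,b\in\mathbb{Z}$. Since $p$ is odd, $a$ must be odd, so $a^2\equiv 1\pmod{8}$. On the other hand, $b^2\in\{0,1,4\}\pmod{8}$, so $2b^2\in\{0,2\}\pmod{8}$. Therefore $p = a^2 - 2b^2 \equiv 1$ or $1-2 \equiv 7 \pmod{8}$, which gives one direction.

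For the backward direction, assume $p\equiv 1,7\pmod{8}$. The main input I will cite is the second supplementary law of quadratic reciprocity, which tells us that $2$ is a quadratic residue modulo $p$ under exactly this congruence condition. Thus there exists $c\in\mathbb{Z}$ with $c^{2}\equiv 2\pmod{p}$. Applying Lemma~\ref{lem:thue's} to this $c$ and $n=p$ produces integers $x,y$ with $0 < |x|,|y| < \sqrt{p}$ and $x\equiv cy\pmod{p}$. Squaring gives $x^{2}\equiv c^{2}y^{2}\equiv 2y^{2}\pmod{p}$, so $p\mid x^{2}-2y^{2}$. The size constraints yield $-2p < x^{2}-2y^{2} < p$, and $x^{2}-2y^{2}\neq 0$ because $\sqrt{2}$ is irrational while $x,y$ are nonzero. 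The only nonzero multiple of $p$ in the interval $(-2p,p)$ is $-p$, so $x^{2}-2y^{2}=-p$.

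The remaining obstacle, and in some sense the only nontrivial wrinkle, is that Thue's lemma delivers an element of norm $-p$, whereas the theorem asks for an element of norm $+p$. I will resolve this by exploiting the fundamental unit $1+\sqrt{2}$ of $\mathbb{Z}[\sqrt{2}]$, which has norm $-1$. Multiplying out,
\[
(x+y\sqrt{2})(1+\sqrt{2}) \;=\; (x+2y) + (x+y)\sqrt{2},
\]
and taking norms (equivalently, a direct expansion) gives $(x+2y)^{2}-2(x+y)^{2} = -(x^{2}-2y^{2}) = p$. Setting $a=x+2y$ and $b=x+y$ therefore exhibits $p$ in the desired form $a^{2}-2b^{2}$, completing the proof.
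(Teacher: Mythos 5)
Your proof is correct, and for the substantive (backward) direction it follows exactly the paper's route: the second supplementary law gives a square root of $2$ modulo $p$, Thue's lemma produces $x,y$ with $0<|x|,|y|<\sqrt{p}$ and $p\mid x^{2}-2y^{2}$, and the identity $(x+2y)^{2}-2(x+y)^{2}=-(x^{2}-2y^{2})$ (i.e., multiplication by the norm-$(-1)$ unit $1+\sqrt{2}$) flips the sign when needed. Two small points where you diverge, both to your credit: for the forward direction the paper argues that $a^{2}\equiv 2b^{2}\pmod p$ forces $2$ to be a quadratic residue and then invokes the supplementary law again, whereas you give a direct mod-$8$ computation ($a$ odd so $a^{2}\equiv 1$, $2b^{2}\in\{0,2\}$), which is more elementary and avoids the unstated step that $p\nmid b$; and you observe that the size bounds actually pin the norm down to $-p$ (the interval $(-2p,p)$ contains only one nonzero multiple of $p$), whereas the paper leaves it as $\pm p$ and treats both cases. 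You also explicitly flag the $p=2$ issue (note $2=2^{2}-2\cdot 1^{2}$ while $2\not\equiv 1,7\pmod 8$, so the statement really does require $p$ odd), which the paper's proof silently assumes. None of these differences affect correctness.
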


\begin{proof} If $p = a^2 - 2b^2$ where $a, b \in \mathbb{Z}$, then $a^2 \equiv 2b^2$ (mod $p$). Hence, $2$ is a quadratic residue modulo $p$, but this is equivalent to $p \equiv 1,7$ (mod $8$).

On the other hand, if $p \equiv 1, 7$ (mod $8$), then $2$ is a quadratic residue modulo $p$, so there exists $c$ such that $c^2 \equiv 2$ (mod $p$). Using Lemma \ref{lem:thue's}, there exist $a, b \in \mathbb{Z}$ with $0 < |a|, |b| < \sqrt{p}$ and $a \equiv bc$ (mod $p$). We then square the previous expression and obtain
$$a^2 \equiv b^2c^2 \equiv 2b^2 \pmod p  \ \Rightarrow \ a^2 - 2b^2 \equiv 0 \pmod p.$$
Since $0 < a, b < \sqrt{p}$ and $p|(a^2 - 2b^2)$, then $a^2 - 2b^2 = \pm p$. If $a^2 - 2b^2 = p$, we have proved the theorem. Otherwise, $-p = a^2 - 2b^2$, and note that
$$p = 2b^2 - a^2 = (a + 2b)^2 - 2(a + b)^2.$$ \end{proof}

Therefore, by using Theorem \ref{thm:sqrt2primes} \ref{thm:form2} and the fundamental unit $1+\sqrt{2}$, each prime $p \equiv 1, 7$ (mod $8$) contributes infinitely many $\mathbb{Z}[\sqrt{2}]$ primes of the form $\pm(x\pm y\sqrt{2})(1+\sqrt{2})^{2k}$, where $p = x^2-2y^2$ and $k$ is any integer. However, there are only four associates with the same Euclidean norm, namely, $\pm(x\pm y\sqrt{2})$. This implies the \textit{$4$-fold symmetry} of primes in $\mathbb{Z}[\sqrt{2}]$ as contrast to the 8-fold symmetry of $\mathbb{Z}[i]$. Hence, it is sufficient to study walks on $\mathbb{Z}[\sqrt{2}]$ primes in the first quadrant. In Figures \ref{fig:plot50} and \ref{fig:plot200}, we notice that primes in $\mathbb{Z}[\sqrt{2}]$ tend to cluster near the asymptotes $y = \pm x/\sqrt{2}$, whereas Gaussian primes only cluster near the origin. Because of this distinct geometric structure of primes in $\mathbb{Z}[\sqrt{2}]$, if there exists an unbounded walk in $\mathbb{Z}[\sqrt{2}]$ that steps on the primes with steps of bounded length, then the walk is expected to stay close to the asymptotes.

\begin{figure}[ht]
\centering
\begin{subfigure}[t]{0.40\textwidth}
  \centering
  \includegraphics[width=0.9\linewidth]{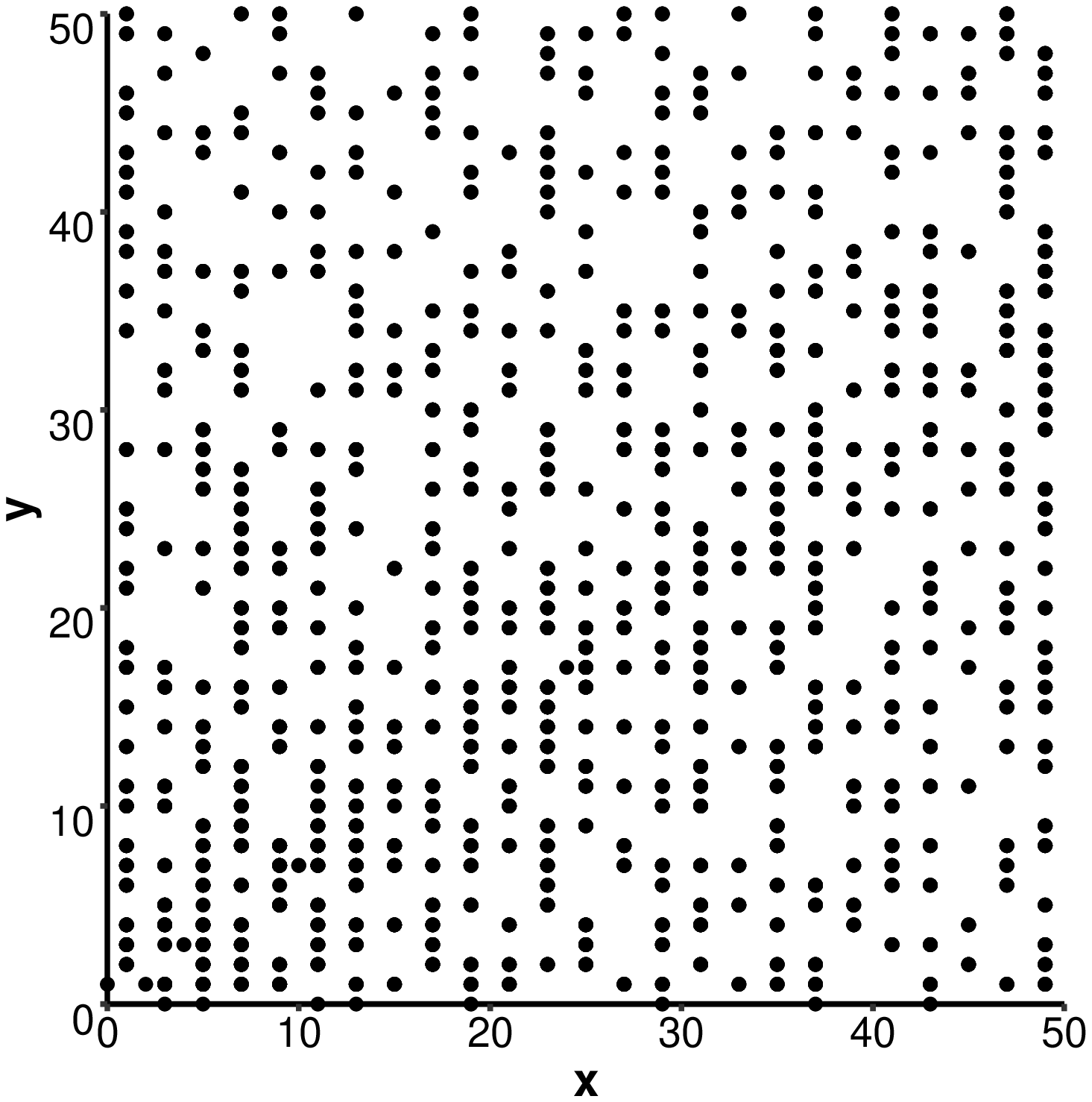}
  \caption{}
  \label{fig:plot50}
 \end{subfigure}
\begin{subfigure}[t]{0.40\textwidth}
  \centering
  \includegraphics[width=0.9\linewidth]{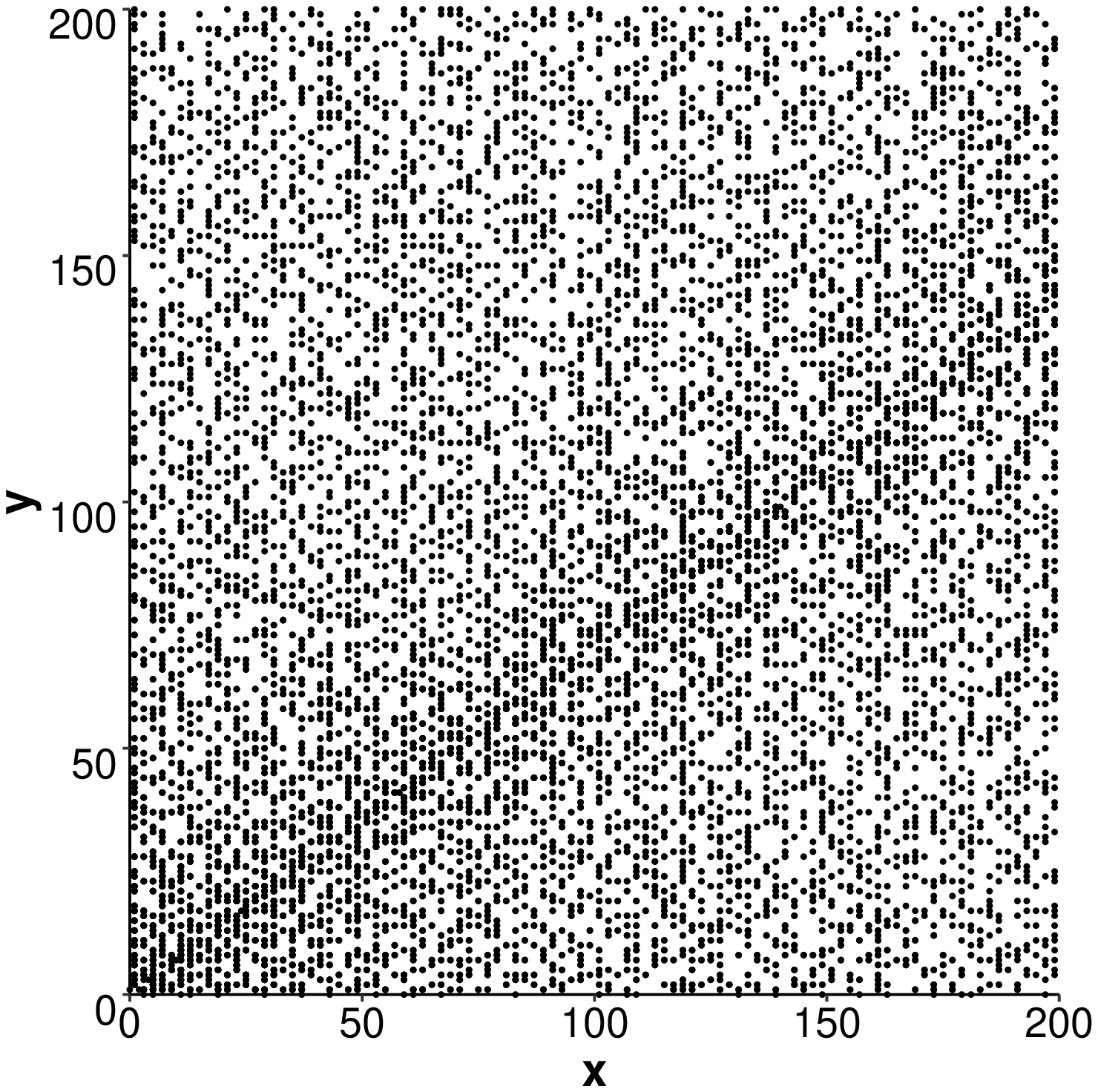}
  \caption{}
  \label{fig:plot200}
\end{subfigure}
\caption{Primes in $\mathbb{Z}[\sqrt{2}]$ where $0 \le x,y \le 50$ (Figure \ref{fig:plot50}) and $0 \le x,y \le 200$ (Figure \ref{fig:plot200}).}
\end{figure}

\subsection{Results}
We observe in Figure \ref{fig:comparison} that the number of primes in the disk centered at the origin with radius $n < 60$ in $\mathbb{Z}[\sqrt{2}]$ is greater than that of $\mathbb{Z}[i]$. The observation leads us to Theorem \ref{thm:prob_prime_in_NR} in Section \ref{section:2Modeling}, in which we show that the density of primes in $\mathbb{Z}[\sqrt{2}]$ is higher than that in $\mathbb{Z}[i]$. We also deduce from Theorem \ref{thm:prob_prime_in_NR} and Figures \ref{fig:plot50} and \ref{fig:plot200} that a prime walk in $\mathbb{Z}[\sqrt{2}]$ in the first quadrant is more likely to exist near the asymptote $y = x/\sqrt{2}$. In Section \ref{sec:proof_of_main_results}, we then present the main theorem of this paper, Theorem \ref{thm:main_theorem}, which states that it is impossible to perform a walk to infinity taking steps of bounded length on primes in $\mathbb{Z}[\sqrt{2}]$ if the walk remains within some bounded distance from the asymptote $y = x/\sqrt{2}$. Lastly, in Section \ref{sec:visualizing_prime_walks}, we present some evidence that the longest walk possible must stay close to the asymptote. We are therefore led to Conjecture \ref{conj:main}, which states that it is impossible to have such a walk to infinity in $\mathbb{Z}[\sqrt{2}]$.

\begin{figure}[ht]
\centering
\includegraphics[width=2.4in]{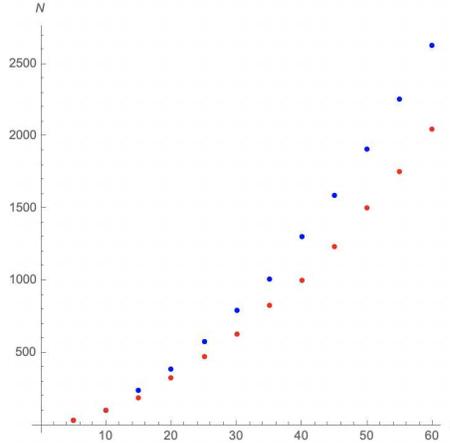}
\caption{The number of primes ($N$) in $\mathbb{Z}[i]$ (red, bottom) and  $\mathbb{Z}[\sqrt{2}]$ (blue, top) within the disk centered at the origin with radius $n$.}
\label{fig:comparison}
\end{figure}

\section{Modeling primes in \texorpdfstring{$\mathbb{Z}[\sqrt{2}]$}{Z[/2]}}\label{section:2Modeling}

\subsection{Density of Gaussian primes}

Our goal in this section is to obtain an asymptotic count for the density of primes in $\mathbb{Z}[\sqrt{2}]$. To do so, we adapt Loh's approximation of Gaussian primes in the disk of radius $r$ centered at the origin \cite[p.\ 143]{L}, which relies on the \textit{prime number theorem} \cite[p.\ 10]{HW} and \textit{Dirichlet's theorem on arithmetic progressions} \cite[pp.\ 542--545]{N}.

\begin{thm}[Prime number theorem, PNT]\label{thm:PNT} Let $\pi(x)$ be the number of primes not exceeding a real number $x$. Then
$$\pi(x) \sim \frac{x}{\log x}.$$
\end{thm}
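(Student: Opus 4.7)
The plan is to follow the classical analytic approach, which links the distribution of primes to the behavior of the Riemann zeta function $\zeta(s) = \sum_{n=1}^{\infty} n^{-s}$ on the line $\mathrm{Re}(s) = 1$. First I would establish, for $\mathrm{Re}(s) > 1$, the Euler product identity $\zeta(s) = \prod_{p} (1 - p^{-s})^{-1}$, which is the analytic counterpart of unique factorization in $\mathbb{Z}$ and is the bridge between $\zeta$ and the primes. Taking a logarithmic derivative yields $-\zeta'(s)/\zeta(s) = \sum_{n=1}^{\infty} \Lambda(n) n^{-s}$, where $\Lambda$ is the von Mangoldt function; this reduces the asymptotics of $\pi(x)$ to those of the Chebyshev function $\psi(x) = \sum_{n \le x} \Lambda(n)$.

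Next I would meromorphically continue $\zeta$ to the half-plane $\mathrm{Re}(s) > 0$, where it is analytic except for a simple pole with residue $1$ at $s = 1$. The crux of the proof is establishing that $\zeta(1 + it) \neq 0$ for every real $t \neq 0$. The standard device is Mertens' inequality $|\zeta(\sigma)^3 \zeta(\sigma + it)^4 \zeta(\sigma + 2it)| \ge 1$ for $\sigma > 1$, which follows by expanding $\log \zeta$ as a Dirichlet series and invoking the trigonometric identity $3 + 4\cos\theta + \cos 2\theta = 2(1 + \cos\theta)^2 \ge 0$. A zero of $\zeta$ at $1 + it$ would force the left side to tend to $0$ as $\sigma \to 1^+$, a contradiction.

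Having secured non-vanishing on the line $\mathrm{Re}(s) = 1$, I would apply a Tauberian theorem, either the Wiener--Ikehara theorem or Newman's short-contour variant, to the Dirichlet series $-\zeta'(s)/\zeta(s) - 1/(s-1)$, whose boundary behavior is now controlled. This yields $\psi(x) \sim x$, and a routine partial summation then recovers $\pi(x) \sim x/\log x$.

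The main obstacle is the non-vanishing result $\zeta(1+it) \neq 0$; although the cube--fourth--squared inequality supplies a compact argument, it depends delicately on analytic properties of $\zeta$ that must first be verified by careful estimation near the boundary of the region of absolute convergence. The Tauberian passage is technical but essentially routine once the boundary behavior of the Dirichlet series is understood. An alternative route, due to Selberg and Erdos, avoids complex analysis entirely at the cost of substantially more intricate combinatorial manipulation of weighted prime sums; given that the paper only uses the PNT as a black box for modeling prime densities, the analytic route above is the most economical path to the stated asymptotic.
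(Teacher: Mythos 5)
The paper offers no proof of this statement at all: the Prime Number Theorem is quoted as a classical result with a citation to Hardy and Wright, and it is used purely as a black box in the prime-counting estimates of Section~\ref{section:2Modeling} (Theorems~\ref{thm:Gaussian_disk} and \ref{thm:primes_in_NR}). Your outline of the analytic proof --- the Euler product, the logarithmic derivative reducing $\pi(x)$ to the Chebyshev function $\psi(x)=\sum_{n\le x}\Lambda(n)$, the meromorphic continuation of $\zeta$ to $\mathrm{Re}(s)>0$ with a simple pole at $s=1$, the non-vanishing of $\zeta(1+it)$ via the inequality $3+4\cos\theta+\cos 2\theta\ge 0$, and a Wiener--Ikehara or Newman Tauberian step followed by partial summation --- names exactly the right ingredients in the right order, and nothing in it is wrong. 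It is, however, a roadmap rather than a proof: the continuation of $\zeta$, the Tauberian theorem, and the passage from $\psi(x)\sim x$ to $\pi(x)\sim x/\log x$ are each invoked rather than carried out, and each requires genuine work to execute. Given that the paper needs only the asymptotic as an input, citing the literature (as the authors do) is the appropriate and more economical move; if a self-contained argument were wanted, Newman's short contour-integral Tauberian variant that you mention is indeed the leanest complete route, while the Selberg--Erd\H{o}s elementary proof buys independence from complex analysis at the cost of considerably more combinatorial effort.
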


We note that here, as well as throughout the paper, $\log$ refers to the natural logarithm, and $f(x) \sim g(x)$ means that $\lim_{x\to \infty} f(x)/g(x) = 1.$

\begin{defi}[Dirichlet density] For two sets of positive integers $S \subseteq T$, with $\sum_{n\in T} n^{-1}$ divergent, the \textit{Dirichlet density of $S$ in $T$}, denoted by $d(S,T)$, exists if and only if
\begin{equation*}
    \lim_{s \to 1^{+}} \inf \frac{\sum_{n \in S} n^{-s}}{\sum_{n \in T} n^{-s}}  =   d(S,T)  =  \lim_{s \to 1^{+}} \sup \frac{\sum_{n \in S} n^{-s}}{\sum_{n \in T} n^{-s}}.
\end{equation*}
\end{defi}

\begin{defi}\label{def:P,a,m} We let $\mathbb{P}$ be the set of positive primes and $\mathbb{P}(a;m)$ the set of positive primes $p$ such that $p\equiv a$ (mod $m$).
\end{defi}

\begin{thm}[Primes in arithmetic progressions]\label{thm:dirichlet_density}
Suppose $a,m \in \mathbb{Z}$, with $\gcd(a,m) =1$.  Then $d(\mathbb{P}(a;m), \mathbb{P}) = 1/\varphi(m)$, where $\varphi$ is Euler's totient function.
\end{thm}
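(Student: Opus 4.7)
The plan is to prove this via the classical analytic approach using Dirichlet characters and $L$-series. First, I would work with the group of Dirichlet characters modulo $m$, which has order $\varphi(m)$, and for each character $\chi$ introduce the $L$-function $L(s,\chi)=\sum_{n\ge 1}\chi(n)n^{-s}$ with Euler product $L(s,\chi)=\prod_p(1-\chi(p)p^{-s})^{-1}$ valid for $\Re(s)>1$. Taking logarithms, expanding the geometric series, and absorbing the convergent prime-power tails gives
\begin{equation*}
\log L(s,\chi) \;=\; \sum_p \frac{\chi(p)}{p^s} \;+\; O(1)\qquad\text{as }s\to 1^+.
\end{equation*}

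Second, I would invoke the orthogonality relations: for $\gcd(n,m)=1$,
\begin{equation*}
\frac{1}{\varphi(m)}\sum_{\chi}\overline{\chi(a)}\,\chi(n) \;=\; \begin{cases} 1 & \text{if } n\equiv a\pmod{m},\\ 0 & \text{otherwise,}\end{cases}
\end{equation*}
while the sum vanishes when $\gcd(n,m)>1$. Applying this at prime arguments and summing yields
\begin{equation*}
\sum_{p\in\mathbb{P}(a;m)}\frac{1}{p^s} \;=\; \frac{1}{\varphi(m)}\sum_{\chi}\overline{\chi(a)}\sum_p\frac{\chi(p)}{p^s} \;+\; O(1),
\end{equation*}
where the $O(1)$ absorbs the finitely many primes dividing $m$.

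Third, I would analyze each contribution as $s\to 1^+$. For the principal character $\chi_0$, the function $L(s,\chi_0)$ inherits a simple pole at $s=1$ from $\zeta(s)$, so $\sum_p\chi_0(p)p^{-s}\sim\log\frac{1}{s-1}$. For each non-principal $\chi$, $L(s,\chi)$ extends analytically past $s=1$, and provided $L(1,\chi)\neq 0$ the quantity $\sum_p\chi(p)p^{-s}$ stays bounded. Applying the same reasoning to $\zeta(s)$ itself gives $\sum_{p\in\mathbb{P}}p^{-s}\sim\log\frac{1}{s-1}$, and hence
\begin{equation*}
\frac{\sum_{p\in\mathbb{P}(a;m)}p^{-s}}{\sum_{p\in\mathbb{P}}p^{-s}} \;\longrightarrow\; \frac{1}{\varphi(m)}\qquad\text{as }s\to 1^+,
\end{equation*}
which is precisely $d(\mathbb{P}(a;m),\mathbb{P})=1/\varphi(m)$.

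The main obstacle is the non-vanishing $L(1,\chi)\neq 0$ for every non-principal character $\chi$. Complex characters yield to a pairing argument with $\overline{\chi}$, but real non-principal characters require the classical trick of examining the product $\prod_{\chi}L(s,\chi)$, which is a Dirichlet series with non-negative coefficients; a vanishing real $L(1,\chi)$ would cancel the pole of $L(s,\chi_0)$ and force this product to extend to an entire function whose growth on the positive real axis contradicts the non-negativity of its coefficients. I would cite Nathanson \cite{N} for this delicate step rather than reproduce it in full.
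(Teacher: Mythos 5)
Your outline is the standard character-theoretic proof and is correct in all essentials: orthogonality of the Dirichlet characters isolates the progression, the principal character contributes the $\log\frac{1}{s-1}$ singularity that matches the denominator $\sum_{p\in\mathbb{P}}p^{-s}$, and the whole argument reduces to the non-vanishing $L(1,\chi)\neq 0$ for non-principal $\chi$, which you correctly identify as the crux and defer to the literature. The paper itself gives no proof of this statement --- it is quoted as a known result with a citation to Neukirch --- so there is nothing internal to compare against; your sketch is precisely the argument the cited source supplies, and since you produce an actual limit of the ratio as $s\to 1^{+}$, it also satisfies the paper's $\liminf=\limsup$ formulation of Dirichlet density.
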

Using the above tools, Loh \cite[p.\ 143]{L} has obtained the following theorem.

\begin{thm} \label{thm:Gaussian_disk}
The number of Gaussian primes contained within the disk of sufficiently large radius $r$ about the origin is asymptotic to  $$\frac{2r^2}{\log r}.$$
\end{thm}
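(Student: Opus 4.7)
The plan is to partition the Gaussian primes in the disk of radius $r$ according to the three cases of Theorem \ref{thm:Gaussianprime}, count each class using the prime number theorem together with Theorem \ref{thm:dirichlet_density}, and then show that only case \ref{thm:Gauss2} contributes to the leading order.

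First, I would note that the associate $1+i$ case contributes only finitely many Gaussian primes (its four associates), which is negligible. Next, for case \ref{thm:Gauss3}, the Gaussian primes of the form $a+0i$ with $|a|$ a real prime and $a\equiv 3 \pmod 4$ (together with their associates $\pm a, \pm ai$) lie in the disk if and only if $|a|\le r$. By Theorem \ref{thm:dirichlet_density} applied to $m=4$, together with the prime number theorem, the number of real primes $q\le r$ with $q\equiv 3\pmod 4$ is asymptotic to $\tfrac{1}{2}\pi(r)\sim \tfrac{r}{2\log r}$. Multiplying by $4$ associates shows this contribution is $O(r/\log r)$, which is $o(r^2/\log r)$.

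The main term therefore comes from case \ref{thm:Gauss2}. Here a Gaussian prime $a+bi$ lies in the disk precisely when its norm $a^2+b^2\le r^2$. As recalled in the text, each real prime $p\equiv 1\pmod 4$ admits a unique representation $p=a^2+b^2$ with $0<a<b$, and by the $8$-fold symmetry each such $p$ yields exactly eight Gaussian primes of norm $p$. Counting the real primes $p\le r^2$ with $p\equiv 1\pmod 4$ via Theorem \ref{thm:dirichlet_density} and Theorem \ref{thm:PNT} gives
\[
\#\{p\le r^2:\ p\equiv 1\pmod 4\}\ \sim\ \tfrac{1}{2}\pi(r^2)\ \sim\ \frac{r^2}{2\log(r^2)}\ =\ \frac{r^2}{4\log r}.
\]
Multiplying by the $8$ associates in each class gives the asymptotic $\dfrac{2r^2}{\log r}$, and adding the negligible contributions from the other cases yields the claim.

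I do not expect a serious obstacle: the only subtlety is to invoke Dirichlet's density statement as an asymptotic equidistribution within $\pi(x)$ (which is exactly the content needed), and to make sure that the square-both-coordinates condition $a^2+b^2\le r^2$ is translated correctly into the norm condition $p\le r^2$ before applying PNT at $r^2$ rather than at $r$. The factor $\log(r^2)=2\log r$ is what converts the naive $4\pi(r^2)$ into the stated $2r^2/\log r$.
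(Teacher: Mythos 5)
Your proposal is correct and follows essentially the same route as the paper's own proof: decompose by the three cases of Theorem \ref{thm:Gaussianprime}, count the $(4k+1)$-primes up to $r^2$ via the prime number theorem and Theorem \ref{thm:dirichlet_density}, multiply by the eight associates, and observe that the axis primes contribute only $O(r/\log r)$. The arithmetic and the treatment of the $\log(r^2)=2\log r$ factor match the paper exactly.
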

\begin{proof}

Theorem \ref{thm:dirichlet_density} implies that the densities of primes of the forms $4k+1$ and $4k+3$ are the same since $d(\mathbb{P}(1;4), \mathbb{P}) = d(\mathbb{P}(3;4), \mathbb{P}) = 1/\varphi(4) = 1/2$. As in Theorem \ref{thm:Gaussianprime}, all the Gaussian primes can be derived from real primes as follows: \begin{enumerate}[(i)]
    \item\label{obs:2} the real prime 2 gives us four Gaussian primes: $1+i, 1-i, -1+i$, and $-1-i$,
    \item\label{obs:4k+3} any real prime $p=4k+3$ supplies four Gaussian primes: $p, -p, pi$, and $-pi$,
    \item\label{obs:4k+1} any real prime $p=4k+1$ supplies eight Gaussian primes as $p$ can be written as a unique sum $a^2+b^2 = (a+bi)(a-bi)$, i.e.,  each $p$ gives $a+bi, -(a+bi), (a+bi)i, -(a+bi)i, a-bi, -(a-bi), (a-bi)i$, and $-(a-bi)i$.
\end{enumerate}

Now, consider the disk centered at the origin with radius $r \ge 1$. From \ref{obs:4k+3}, any real $(4k+3)$-prime supplies four Gaussian primes on the real and imaginary axes. We derive from the PNT and Theorem \ref{thm:dirichlet_density} that there are asymptotically $r/(\varphi(4)\log r)$ $(4k+3)$-primes, so the number of Gaussian primes lying on the axes within the disk is asymptotic to
\begin{equation}{4\cdot \frac{r}{\varphi(4)\log r} = \frac{2r}{\log r}}\label{eqn:Gaussian_primes_axes}. \end{equation}

Next, consider the disk excluding the axes. For positive integers $1,2,\ldots, r^2$, there are asymptotically $r^2/\log r^2$ primes. Note that we consider the integers from 1 to $r^2$ since each lattice point in the disk has the norm of at most $r^2$. By Theorem \ref{thm:dirichlet_density}, asymptotically half of them are primes of the form $4k+1$. From \ref{obs:4k+1}, the number of Gaussian primes supplied by these $(4k+1)$-primes is asymptotic to
\begin{equation}{8\cdot\frac{r^2}{2\log r^2} =  \frac{2r^2}{\log r}}.\label{eqn:Gaussian_primes_interior}\end{equation} Combining the results from \ref{obs:2}, \eqref{eqn:Gaussian_primes_axes}, and \eqref{eqn:Gaussian_primes_interior} gives the desired estimate.
\end{proof}

Thus, the density of Gaussian primes in the disk of radius $r$ centered at the origin can be obtained by dividing the estimate in Theorem \ref{thm:Gaussian_disk} by the number of lattice points in the disk, which is known to be $\pi r^2 + O(r)$ \cite[p.\ 67]{H}.

\begin{thm} \label{thm:Gaussian density}
The density of Gaussian primes in the disk of radius $r$ centered at the origin is asymptotic to   $$\frac{2}{\pi\log r},$$ i.e., the probability that a Gaussian integer in the disk is prime is asymptotically $2/(\pi\log r)$.
\end{thm}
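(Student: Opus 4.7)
The plan is to obtain the density by dividing the asymptotic count from Theorem \ref{thm:Gaussian_disk} by the asymptotic count of lattice points inside the disk, then verify that the error terms do not affect the stated asymptotic $2/(\pi\log r)$.

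More concretely, I would first recall that Theorem \ref{thm:Gaussian_disk} gives the numerator: the number of Gaussian primes in the disk of radius $r$ about the origin is asymptotic to $2r^2/\log r$. For the denominator, I would invoke the classical Gauss circle bound, cited in the excerpt from Hardy, stating that the number of lattice points $(a,b) \in \mathbb{Z}^2$ with $a^2+b^2 \leq r^2$ equals $\pi r^2 + O(r)$. Defining the density as the ratio of these two counts is the natural interpretation of "probability that a Gaussian integer in the disk is prime," so the theorem is then a direct asymptotic calculation.

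The actual calculation is a one-line manipulation: since $\pi r^2 + O(r) = \pi r^2\bigl(1 + O(1/r)\bigr)$, one has
\begin{equation*}
\frac{2r^2/\log r}{\pi r^2 + O(r)} \;=\; \frac{2}{\pi \log r}\cdot\frac{1}{1 + O(1/r)} \;\sim\; \frac{2}{\pi \log r},
\end{equation*}
since the factor $1/(1+O(1/r))$ tends to $1$ as $r\to\infty$ and does not perturb the asymptotic. To be careful, I would phrase the asymptotic relation $\sim$ exactly as defined earlier in the paper ($\lim_{r\to\infty}$ of the ratio equals $1$) and check that the combination of $\sim$ in the numerator with the $\pi r^2 + O(r)$ in the denominator is valid, which it is because the denominator is comparable to $\pi r^2$ for large $r$.

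There is no real obstacle here: both inputs are already assumed, and the proof is essentially just taking a quotient and noting that the lower-order error term is absorbed. The only subtlety worth mentioning explicitly is that the Gauss circle error $O(r)$ is of smaller order than the main term $\pi r^2$, so the ratio simplifies cleanly; this justifies calling $2/(\pi\log r)$ the probability that a randomly chosen Gaussian integer inside the disk is prime.
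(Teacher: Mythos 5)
Your proposal is correct and matches the paper's approach exactly: the paper likewise obtains the density by dividing the count $2r^2/\log r$ from Theorem \ref{thm:Gaussian_disk} by the Gauss circle lattice-point count $\pi r^2 + O(r)$. Your additional verification that the $O(r)$ error term does not disturb the asymptotic is a welcome bit of explicitness that the paper leaves implicit.
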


\subsection{Density of primes in  \texorpdfstring{$\mathbb{Z}[\sqrt{2}]$}{Z[/2]}}

In generalizing the PNT to the Gaussian primes, we consider primes whose norms lie within an interval. In particular, we estimate the number of Gaussian primes of the form $a+bi$ such that $a^2+b^2 \le r^2$. We follow this line of thinking in generalizing further to primes in $\mathbb{Z}[\sqrt{2}]$, where we take the absolute value of the norm as the norm can be negative. Here, we estimate the number of primes of the form $a+b\sqrt{2}$ such that \begin{equation}\label{eqn:norminequality}
|a^2 - 2b^2| \leq r^2. \end{equation}

\begin{figure}[ht]
  \begin{subfigure}[t]{0.48\textwidth}
  \centering
    \includegraphics[width=0.9\textwidth]{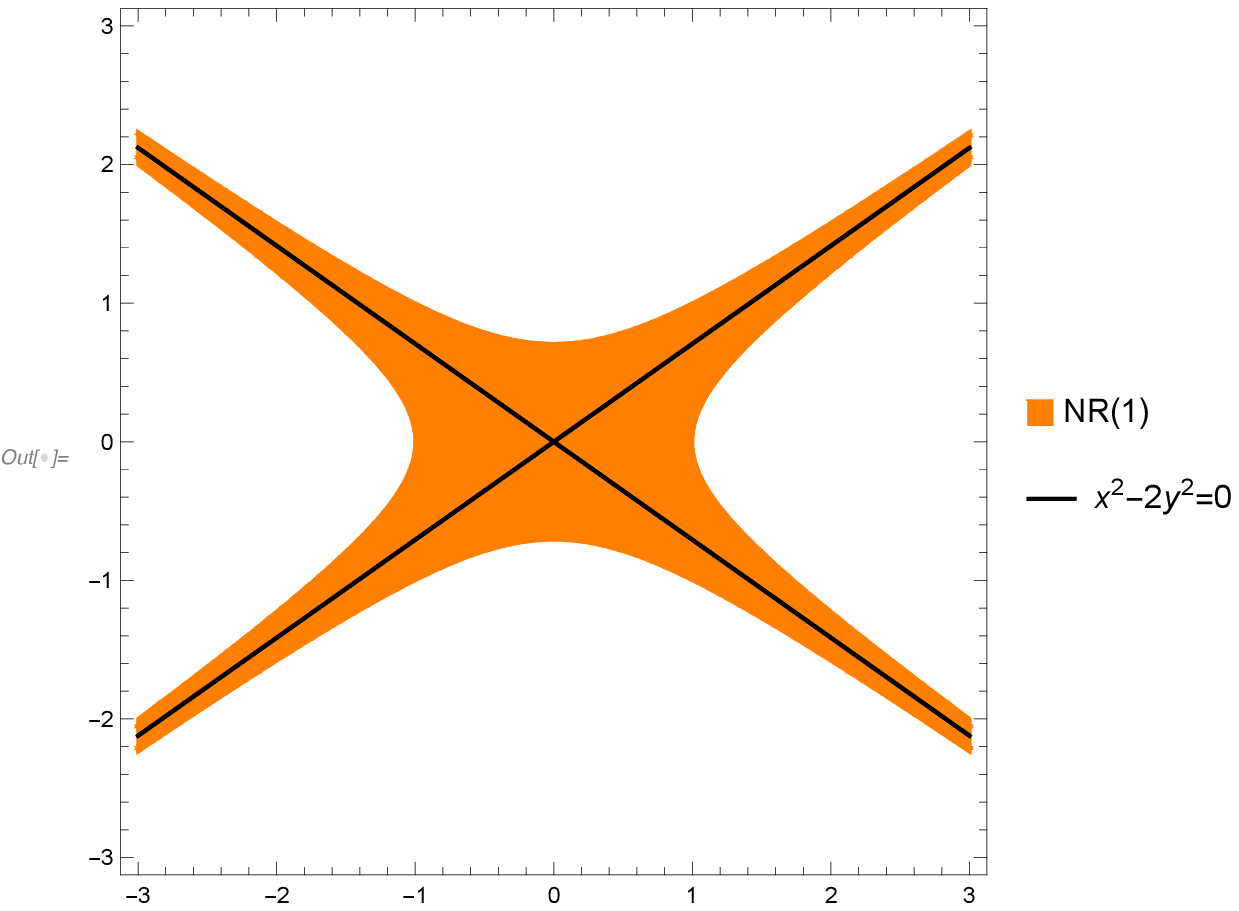}
    \caption{}
    \label{fig:NR1}
  \end{subfigure}
  \begin{subfigure}[t]{0.48\textwidth}
  \centering
    \includegraphics[width=0.9\textwidth]{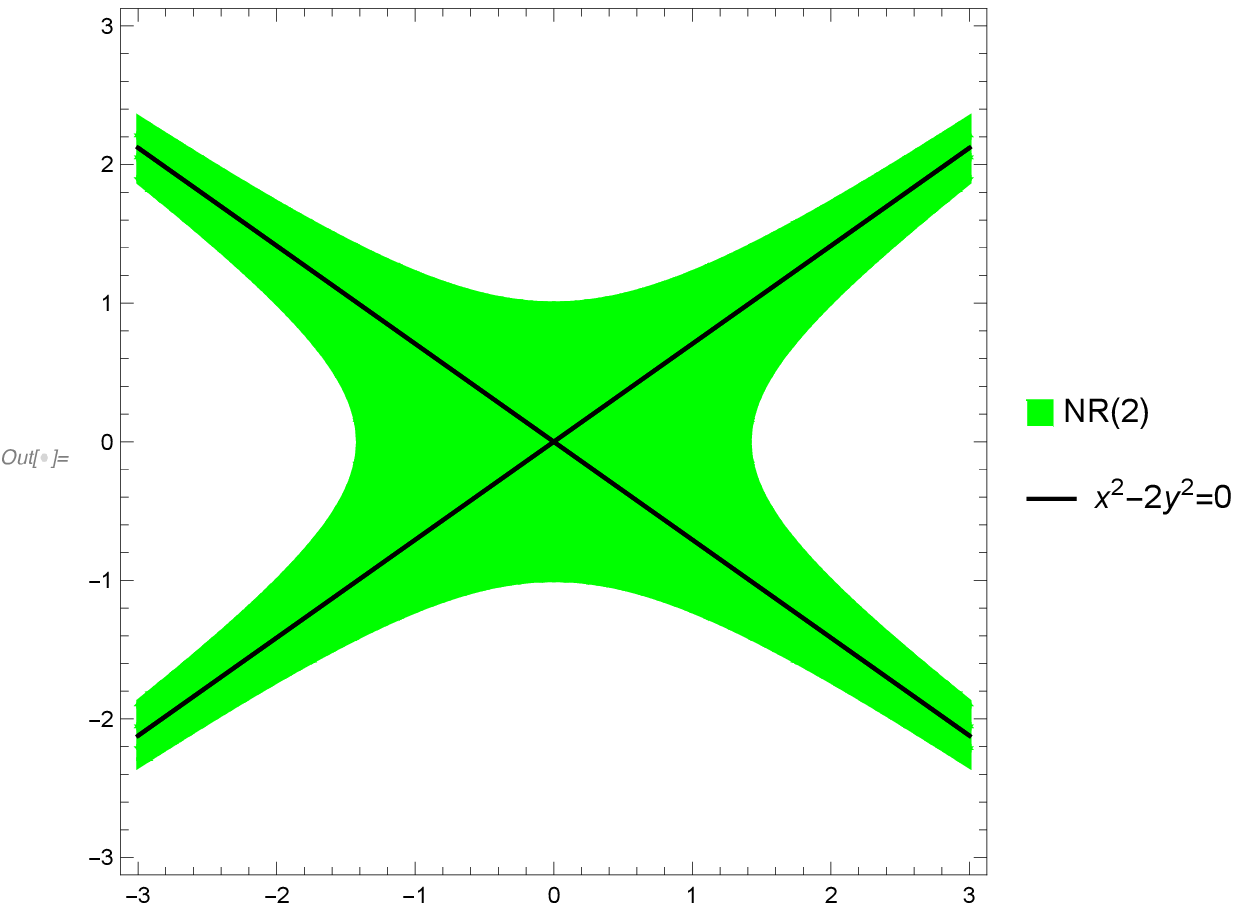}
    \caption{}
    \label{fig:NR2}
  \end{subfigure}
  \caption{Norm regions $\NR(1)$ (orange; Figure \ref{fig:NR1}) and $\NR(2)$ (green; Figure \ref{fig:NR2}), enclosed by $|x^2-2y^2|=1$ and $|x^2-2y^2|=2$, respectively, and the asymptotes $y= \pm x/\sqrt{2}$, i.e., the graph $x^2-2y^2 =0$ (black).}
\end{figure}

\begin{defi}\label{def:norm-region}
    Let $\NR(r^2)$ be the \textit{norm region} defined by $|x^2 - 2y^2| \leq r^2$ where $x,y \in \mathbb{R}$.
\end{defi}

Clearly, $\NR(r^2)$ contains all the points $(a,b) \in \mathbb{Z}[\sqrt{2}]$ that satisfy \eqref{eqn:norminequality}, so our goal is then to estimate the number of primes within $\NR(r^2)$. However, unlike the disk norm region $x^2 +y^2 \le r^2$ that we use for counting the Gaussian primes, $\NR(r^2)$ is unbounded with infinite area as shown in Figures \ref{fig:NR1} and \ref{fig:NR2}. Thus, there might be infinitely many primes within $\NR(r^2)$. To verify this statement and for convenience, we define the following notation:

\begin{defi} \label{def:k-norm_curve}
For any $k \in \mathbb{Z}$, let $\NC(k)$ denote the \textit{$k$-norm curve} or the graph of the equation $x^2-2y^2=k$.
\end{defi}

\begin{lem} \label{lem:infinite_primes}
For any $r^2 \ge 2$, there exist infinitely many primes within $\NR(r^2)$.
\end{lem}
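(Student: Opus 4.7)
The plan is to exhibit a single prime sitting in $\NR(r^2)$ and then generate infinitely many primes from it by repeatedly multiplying by the fundamental unit. The key observation is that the fundamental unit $1+\sqrt{2}$ has norm $(1+\sqrt{2})(1-\sqrt{2}) = -1$, so multiplying any element by $(1+\sqrt{2})^k$ multiplies its norm by $(-1)^k$ and hence leaves $|a^2 - 2b^2|$ unchanged.

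First I would note that by Theorem \ref{thm:sqrt2primes}\ref{thm:form1}, $\sqrt{2}$ is a prime in $\mathbb{Z}[\sqrt{2}]$, with norm $0^2 - 2\cdot 1^2 = -2$, so $|N(\sqrt{2})| = 2 \le r^2$ whenever $r^2 \ge 2$. Thus $\sqrt{2} \in \NR(r^2)$. Next, for every $k \in \mathbb{Z}$, the element $\pi_k := \sqrt{2}\,(1+\sqrt{2})^k$ is an associate of the prime $\sqrt{2}$, and associates of primes are primes (this is just the definition of associate used in Section \ref{subsec:background}). Moreover, by the multiplicativity of the norm,
\[
|N(\pi_k)| \;=\; |N(\sqrt{2})|\cdot |N(1+\sqrt{2})|^k \;=\; 2 \cdot 1^k \;=\; 2 \;\le\; r^2,
\]
so every $\pi_k$ lies in $\NR(r^2)$.

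It remains to verify that the $\pi_k$ are pairwise distinct, which gives infinitely many primes in the norm region. Since $1+\sqrt{2} > 1$ is a real number greater than $1$, the real numbers $(1+\sqrt{2})^k$ are strictly increasing in $k$, hence all distinct; multiplying by the nonzero real number $\sqrt{2}$ preserves distinctness. Therefore $\{\pi_k : k \in \mathbb{Z}\}$ is an infinite family of primes in $\NR(r^2)$, completing the argument.

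There is no real obstacle here; the only thing to be careful about is invoking the right characterizations (Theorem \ref{thm:sqrt2primes} to get a starting prime, and the fact that $1+\sqrt{2}$ is the fundamental unit with norm $-1$, cited in the paragraph after Theorem \ref{thm:sqrt2primes}) so that the argument is self-contained within the results already established in the excerpt.
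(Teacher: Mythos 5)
Your proof is correct and follows essentially the same approach as the paper: start from the prime $\sqrt{2}$ on the norm curve of norm $-2$ and generate infinitely many primes in $\NR(r^2)$ by multiplying by powers of the fundamental unit. The only cosmetic difference is that the paper restricts to even powers $(1+\sqrt{2})^{2n}$ so as to stay exactly on $\NC(-2)$, whereas you use all powers and observe that $|N|$ is preserved either way; both suffice for the lemma, and your explicit distinctness check is a welcome extra detail.
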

\begin{proof}
Notice that the $k$-norm curve that is closest to the origin and contains at least one prime is $\NC(-2)$ because $\sqrt{2}$ and $-\sqrt{2}$ lie on the curve. We know that, for every prime $x+y\sqrt{2}$ in $\mathbb{Z}[\sqrt{2}]$, its associates of the form
$\pm(x+y\sqrt{2})(1+\sqrt{2})^{2n}$, where $n \in \mathbb{Z}$,
are also primes on the same curve. In particular, there are infinitely many primes on $\NC(-2)$, so any norm region $\NR(r^2)$ where $r^2 \ge 2$ contains infinitely many primes.
\end{proof}

According to Lemma \ref{lem:infinite_primes}, it is impossible to approximate the number of primes in $\mathbb{Z}[\sqrt{2}]$ within a certain norm region. However, as the number of $k$-norm curves within any $\NR(r^2)$ is finite, we now shift our goal to estimate the number of $k$-norm curves that contain primes in $\mathbb{Z}[\sqrt{2}]$ and reside within the norm region $\NR(r^2)$. For convenience, for any elements in $\mathbb{Z}[\sqrt{2}]$ that lie on the same $k$-norm curve, we say that they are in the same \textit{family}.

\begin{thm} \label{thm:primes_in_NR}
The number of families of primes in $\mathbb{Z}[\sqrt{2}]$ within $\NR(r^2)$ is asymptotic to $$\frac{r^2}{2\log r}.$$
\end{thm}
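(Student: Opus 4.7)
The plan is to identify the theorem's ``family count'' with the count of integers $k$ with $|k|\le r^2$ for which the curve $\NC(k)$ contains at least one $\mathbb{Z}[\sqrt{2}]$ prime, then to classify those $k$ using Theorem \ref{thm:sqrt2primes} and estimate the count via the PNT together with Theorem \ref{thm:dirichlet_density}.  The key structural observation is that $N(1+\sqrt{2})=-1$: for any prime $\pi$ of norm $k$, the associate $(1+\sqrt{2})\pi$ is a prime of norm $-k$.  Hence the admissible $k$-values come in symmetric pairs $\{k,-k\}$, so each eligible positive $|k|$ contributes two families.

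By Theorem \ref{thm:sqrt2primes}, the admissible $|k|$ split into three disjoint classes: from part \ref{thm:form1}, $|k|=2$ (the pair generated by $\sqrt{2}$ and its associates); from part \ref{thm:form2}, $|k|=p$ for each rational prime $p\equiv 1,7\pmod 8$, with existence of a prime on $\NC(p)$ supplied by the theorem immediately preceding this statement and existence on $\NC(-p)$ by the unit argument above; and from part \ref{thm:form3}, $|k|=p^2$ for each rational prime $p\equiv 3,5\pmod 8$, with the rational prime $p$ itself realizing $\NC(p^2)$.  Disjointness is immediate since the three shapes (the value $2$, an odd rational prime, and the square of an odd rational prime) are mutually exclusive, so no family is double-counted.

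Counting each class within $|k|\le r^2$ is then a routine application of the cited asymptotics.  Part \ref{thm:form1} contributes $O(1)$.  For part \ref{thm:form2}, the PNT gives $\pi(r^2)\sim r^2/(2\log r)$, and Theorem \ref{thm:dirichlet_density} says the residue classes $1$ and $7\pmod 8$ together have density $2/\varphi(8)=1/2$ among the primes, so the eligible rational primes $p\le r^2$ number $\sim r^2/(4\log r)$; each supplies the pair $\{p,-p\}$, for a total of $\sim r^2/(2\log r)$ families.  For part \ref{thm:form3}, the bound $p^2\le r^2$ forces $p\le r$, and the analogous density argument gives only $\sim r/\log r$ families, a lower-order term.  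Summing the three contributions yields the claimed estimate $r^2/(2\log r)$.

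The one mildly delicate point is verifying that \emph{both} signs $\pm p$ actually occur as norms of $\mathbb{Z}[\sqrt{2}]$ primes for every $p\equiv 1,7\pmod 8$---which is precisely where the norm-$(-1)$ unit $1+\sqrt{2}$ enters the argument---and then confirming disjointness of the three families of $k$-values so that nothing is overcounted.  Everything else reduces to a clean invocation of the PNT and Dirichlet density; no delicate error-term analysis is needed because the theorem is only asymptotic.
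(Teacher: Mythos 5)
Your proposal is correct and follows essentially the same route as the paper: classify the possible norms of primes via Theorem \ref{thm:sqrt2primes} into the values $\pm 2$, $\pm p$ for $p\equiv 1,7\pmod 8$, and $\pm p^2$ for $p\equiv 3,5\pmod 8$, then count with the PNT and Theorem \ref{thm:dirichlet_density}, with the $1,7\pmod 8$ class supplying the main term $r^2/(2\log r)$ and the real-axis class contributing only a lower-order $O(r/\log r)$. Your explicit use of the norm-$(-1)$ unit to pair $k$ with $-k$ and your disjointness check are slightly more careful than the paper's write-up but do not change the argument.
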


\begin{proof} According to Theorem \ref{thm:sqrt2primes} \ref{thm:form3}, the number of primes in $\mathbb{Z}[\sqrt{2}]$ within $\NR(r^2)$ that lie on the positive real axis is the same as the number of real primes $\equiv 3,5$ (mod $8$) that are less than $r$. Hence, by the PNT and Theorem \ref{thm:dirichlet_density}, there are asymptotically
\begin{equation}  2 \cdot\frac{r}{\varphi(8)\log r} =   \frac{r}{2\log r} \label{eq:primes3,5mod8} \end{equation} such primes. Since, for each prime $p$ on positive real axis, $-p$ and $p$ are in the same family $\NC(p^2)$, these primes then supply asymptotically $r/(2\log r)$ prime families within $\NR(r^2)$ when considering the entire real axis.

Similarly, there are asymptotically \begin{equation} 2 \cdot 2 \cdot \frac{r^2}{\varphi(8)\log r^2} = \frac{r^2}{2\log r} \label{eq:primes1,7mod8}\end{equation} real primes $p \equiv 1,7$ (mod $8$) from $-r^2$ to $r^2$. From Theorem \ref{thm:sqrt2primes} \ref{thm:form2}, each norm $p$ supplies a unique family $\NC(p)$.

Combining \eqref{eq:primes3,5mod8}, \eqref{eq:primes1,7mod8}, and another family of primes containing $\pm \sqrt{2}$, $\NC(-2)$, there are asymptotically \begin{equation*} \frac{r^2}{2\log r} \end{equation*} distinct families of primes within the region $\NR(r^2)$.
\end{proof}

The above analysis also confirms our observation that primes in $\mathbb{Z}[\sqrt{2}$] are most dense near the asymptotes $y = \pm x/ \sqrt{2}$, since the region $\NR(r^2)$ for any $r$ each straddles the asymptotes. The distribution of primes in $\mathbb{Z}[\sqrt{2}]$ differs from the Gaussian primes which are most dense near the origin. This fact may have implications towards how to construct the longest walk possible in $\mathbb{Z}[\sqrt{2}]$.

Now that we have an estimate on the number of families of primes within $\NR(r^2)$, we want to determine the total number of families of $\mathbb{Z}[\sqrt{2}]$ integers within this region to make a statement about the probability of encountering a family of primes. Note that not every $\NC(k)$ such that $|k| \le r^2$ contains element of $\mathbb{Z}[\sqrt{2}]$ because integer $k$ 
may not be expressible as $x^2-2y^2$ for some $x,y \in \mathbb{Z}$. 
Thus, to count the number of families of $\mathbb{Z}[\sqrt{2}]$ integers within $\NR(r^2)$, we refer to the following variation of a well-known theorem of Landau \cite{SS}. Note that a more general result can be found in Bernays' 1912 thesis \cite{B}.

\begin{thm} \label{thm:Bernaystheorem}
For any integer $m \neq -k^2$. Let $B_m(n)$ be the number of positive integers less than or equal to $n$ that can be expressed as $f(x,y) = x^2+my^2$ where $x,y \in \mathbb{Z}$. Then
$$B_m(n) \sim \frac{b_mn}{\sqrt{\log n}},$$
where $b_m$ is some positive constant.
\end{thm}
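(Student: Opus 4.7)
The plan is to mimic Landau's classical proof for sums of two squares (the case $m=1$) and extend it to the general positive-density count. The hypothesis $m \ne -k^2$ rules out exactly the degenerate cases in which $x^2+my^2$ factors over $\mathbb{Z}$ into two linear forms; equivalently, it guarantees that $\mathbb{Q}(\sqrt{-m})$ is a genuine quadratic field and that the Kronecker character $\chi$ associated to this field is non-principal. Both ingredients are essential below.

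First, I would describe $S_m = \{n \ge 1 : n = x^2+my^2 \text{ for some } x,y \in \mathbb{Z}\}$ multiplicatively. The norm map on $\mathbb{Z}[\sqrt{-m}]$ shows that $S_m$ is closed under multiplication via the identity
\begin{equation*}
(a^2+mb^2)(c^2+md^2) \;=\; (ac-mbd)^2 + m(ad+bc)^2.
\end{equation*}
For primes $p \nmid 4m$, the condition $p \in S_m$ is governed (up to class-group obstructions discussed below) by the splitting of $p$ in $\mathbb{Q}(\sqrt{-m})$, which in turn is read off from $\chi(p)$. An integer $n$ then lies in $S_m$ roughly when every prime $p$ with $\chi(p)=-1$ appears to an even power in $n$, matching the classical $m=1$ description verbatim.

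Next, I would convert this local characterization into an Euler product for the Dirichlet series $F(s)=\sum_{n \in S_m} n^{-s}$. Collecting the local factors at split, inert, and ramified primes, a direct computation (exactly as in Landau's proof) yields an identity
\begin{equation*}
F(s)^2 \;=\; \zeta(s)\,L(s,\chi)\,H(s),
\end{equation*}
where $H(s)$ is an Euler product converging absolutely and non-vanishing in some half-plane $\mathrm{Re}(s)>\tfrac{1}{2}$. Since $\zeta$ has a simple pole at $s=1$ and $L(1,\chi) \ne 0$ by Dirichlet's non-vanishing theorem, $F$ has an algebraic branch singularity of order $\tfrac{1}{2}$ at $s=1$, namely $F(s) \sim c_m (s-1)^{-1/2}$ with $c_m = \sqrt{L(1,\chi)H(1)} > 0$. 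A Tauberian theorem of Selberg--Delange type (or the specific Wirsing--Landau argument used for sums of two squares) then extracts the counting asymptotic $B_m(n) \sim b_m\, n / \sqrt{\log n}$ with $b_m = c_m/\Gamma(1/2) > 0$.

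The main technical obstacle is the first step when the class number of the order $\mathbb{Z}[\sqrt{-m}]$ is greater than one: then primes $p$ with $\chi(p)=+1$ split in $\mathcal{O}_K$ but may be represented not by the principal form $x^2+my^2$ but by another form in the same genus, so the naive multiplicative characterization fails on a positive-density set of primes. The standard remedy is to invoke genus theory and work instead with the Dirichlet series of the set of integers representable by any form in the principal genus, then to recover representability by $x^2+my^2$ via a further finite union; alternatively, one can appeal directly to Bernays' 1912 thesis, which treats arbitrary primitive binary quadratic forms. In either case the conclusion is unchanged: only the exact value of $b_m$ depends on these class-group subtleties, while the shape $n/\sqrt{\log n}$ and the positivity of $b_m$ are robust.
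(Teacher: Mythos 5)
The paper does not prove this theorem at all: it is quoted as a known result, with the statement attributed to a variation of Landau's theorem as treated by Shanks--Schmid and, in full generality, to Bernays' 1912 thesis. So your proposal is not competing with an argument in the paper; it is supplying the standard analytic proof sketch that the paper outsources. As a sketch it is essentially correct and is indeed the classical route: the composition identity $(a^2+mb^2)(c^2+md^2)=(ac-mbd)^2+m(ad+bc)^2$ makes the representable set multiplicatively closed, the local analysis gives $F(s)^2=\zeta(s)L(s,\chi)H(s)$ with $H$ holomorphic and nonvanishing for $\mathrm{Re}(s)>\tfrac12$, and a Selberg--Delange/Landau Tauberian argument converts the order-$\tfrac12$ singularity at $s=1$ into the $b_m n/\sqrt{\log n}$ asymptotic; you also correctly identify that $m\neq -k^2$ is exactly what keeps the form irreducible and the character non-principal. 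Two caveats are worth making explicit. First, your language is tilted toward the definite (imaginary quadratic) case, whereas the application in the paper is $m=-2$, an indefinite form; there the unit group is infinite and one must distinguish the class group from the narrow class group and track which sign of the norm is represented (for $\mathbb{Z}[\sqrt 2]$ this is harmless because the fundamental unit has norm $-1$, as the paper's Lemma \ref{lem:sign_invariant} exploits). Second, the class-number-$>1$ case is not a side remark but the bulk of the work in the general statement, and your treatment of it is a pointer to genus theory and to Bernays rather than an argument; that is acceptable here only because the theorem is being used as a black box, which is precisely the paper's own posture. In short: your outline buys a self-contained explanation of where $n/\sqrt{\log n}$ comes from, at the cost of genuinely proving only the form-class-number-one case, while the paper buys brevity by citing Bernays for everything.
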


To apply Theorem \ref{thm:Bernaystheorem}, we take $n=r^2$ and $f(x,y) = x^2-2y^2$ and verify that $ m= -2$ is not a negative square. Then we can find an asymptotic estimate for the number of positive integers $k < r^2$ such that $$k = x^2-2y^2 = N(x+y\sqrt{2})$$ for some $x+y\sqrt{2} \in \mathbb{Z}[\sqrt{2}]$. In other words, we have an estimate for the number of families of integers in $\mathbb{Z}[\sqrt{2}]$ with positive norms that reside in $\NR(r^2)$. We determine the number of families of integers with negative norms as follows.

\begin{lem}\label{lem:sign_invariant}
There exist $a_1, b_1 \in \mathbb{Z}$ such that $a_1^2 - 2b_1^2 = k$ if and only if there exist $a_2, b_2 \in \mathbb{Z}$ such that $a_2^2 - 2b_2^2 = -k$.
\end{lem}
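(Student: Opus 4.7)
The plan is to exploit the fact that the fundamental unit $1+\sqrt{2}$ of $\mathbb{Z}[\sqrt{2}]$ has norm $(1)^2 - 2(1)^2 = -1$. Since the norm is multiplicative on $\mathbb{Z}[\sqrt{2}]$, multiplying any element $\alpha = a_1 + b_1\sqrt{2}$ of norm $k$ by $1+\sqrt{2}$ produces an element of norm $-k$, which gives exactly the desired $a_2, b_2$.

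More explicitly, I would begin by assuming $a_1^2 - 2b_1^2 = k$ for some $a_1,b_1\in \mathbb{Z}$, and compute
\[
(a_1+b_1\sqrt{2})(1+\sqrt{2}) \;=\; (a_1+2b_1) + (a_1+b_1)\sqrt{2}.
\]
Setting $a_2 = a_1 + 2b_1$ and $b_2 = a_1 + b_1$ (both clearly in $\mathbb{Z}$), a direct expansion gives
\[
a_2^2 - 2b_2^2 \;=\; (a_1+2b_1)^2 - 2(a_1+b_1)^2 \;=\; -a_1^2 + 2b_1^2 \;=\; -k,
\]
which establishes the forward implication. For the reverse direction, the argument is completely symmetric: starting from $a_2^2 - 2b_2^2 = -k$, the same multiplication by $1+\sqrt{2}$ converts this to an element of norm $-(-k)=k$, producing integers $a_1,b_1$ with $a_1^2 - 2b_1^2 = k$.

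There is no real obstacle here; the only thing to check is that the transformation stays within $\mathbb{Z}$ (which is obvious since $1+\sqrt{2} \in \mathbb{Z}[\sqrt{2}]$) and that the arithmetic cancellation yields exactly the sign flip. I would present the proof as a one-line multiplication identity followed by the symmetric remark for the converse, since both directions are captured by the same algebraic manipulation.
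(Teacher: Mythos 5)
Your proof is correct and follows essentially the same route as the paper: both arguments multiply by the unit $1+\sqrt{2}$, which has norm $-1$, and use multiplicativity of the norm to flip the sign of $k$. Your version merely makes the resulting integers $a_2 = a_1+2b_1$, $b_2 = a_1+b_1$ and the cancellation explicit, which is a fine (slightly more self-contained) presentation of the identical idea.
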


\begin{proof}
This can be proved using the fact that the norm in $\mathbb{Z}[\sqrt{2}]$ is multiplicative. Since $N(1+\sqrt{2}) = -1$, we simply have $$N(a_1 + b_1\sqrt{2})N(1+\sqrt{2}) = -k = N((a_1 + b_1\sqrt{2})(1+\sqrt{2})).$$
\end{proof}

In geometric terms, Lemma \ref{lem:sign_invariant} tells us that if we have an integer that lies on the norm curve $\NC(k)$, then there also exists an integer that lies on the related norm curve $\NC(-k)$, and vice versa. Therefore, for any $r \in \mathbb{N}$, the number of families of $\mathbb{Z}[\sqrt{2}]$ integers within $\NR(r^2)$ is asymptotic to \begin{equation}\label{eq:number_families}
    2\cdot\frac{b_{-2}r^2}{\sqrt{\log r^2}},
\end{equation}
where $b_{-2}$ is a constant as stated in Theorem \ref{thm:Bernaystheorem}.

Then we can divide our estimate of the number of families of primes in $\NR(r^2)$ from Theorem \ref{thm:primes_in_NR} by the total number of families of $\mathbb{Z}[\sqrt{2}]$ integers in $\NR(r^2)$ in \eqref{eq:number_families} to determine the probability of encountering a family of primes in $\NR(r^2)$. Thus, our work culminates in the following estimate.

\begin{thm} \label{thm:prob_prime_in_NR}
The probability that a $\mathbb{Z}[\sqrt{2}]$ integer $z \in \NR(r^2)$ is in a family of prime is asymptotic to $$\frac{1}{2b_{-2}\sqrt{2\log r}}.$$
\end{thm}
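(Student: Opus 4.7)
The plan is to prove Theorem \ref{thm:prob_prime_in_NR} by a direct ratio of two asymptotic counts already established in the section. By definition, the probability in question is the number of prime families in $\NR(r^2)$ divided by the total number of families of $\mathbb{Z}[\sqrt{2}]$ integers in $\NR(r^2)$; both quantities are already in hand, so the proof reduces to an algebraic simplification of their quotient.

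First, I would invoke Theorem \ref{thm:primes_in_NR} to write the numerator: the number of prime families in $\NR(r^2)$ is asymptotic to $r^2/(2\log r)$. For the denominator, I would cite the estimate \eqref{eq:number_families}, namely that the total number of $\mathbb{Z}[\sqrt{2}]$ integer families in $\NR(r^2)$ is asymptotic to $2 b_{-2}\, r^2/\sqrt{\log r^2}$. Recall that this denominator was itself obtained by applying the Bernays--Landau-type result (Theorem \ref{thm:Bernaystheorem}) with $m = -2$ and $n = r^2$ to count positive-norm families, and then doubling via Lemma \ref{lem:sign_invariant} to account for the families of negative norm.

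Next, I would carry out the algebraic simplification, using $\sqrt{\log r^2} = \sqrt{2\log r}$:
$$
\frac{r^2/(2\log r)}{2 b_{-2}\, r^2 / \sqrt{\log r^2}}
\;=\; \frac{\sqrt{2\log r}}{4 b_{-2}\log r}
\;=\; \frac{1}{2 b_{-2}\sqrt{2\log r}}.
$$
Since both the numerator and denominator of the ratio of asymptotic counts tend to infinity at controlled rates, the quotient converges to the displayed expression, which is the asserted probability.

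There is no genuine analytic obstacle here; the argument is a bookkeeping exercise that repackages the two prior results into a single density statement. The only subtlety is interpretive rather than technical: because $\NR(r^2)$ contains infinitely many lattice points for $r^2 \geq 2$ (Lemma \ref{lem:infinite_primes}), the word \emph{probability} must be read as a ratio of family counts rather than a naive point-count density on the unbounded region. I would insert a brief sentence flagging this convention so the reader does not expect a measure-theoretic density in the usual sense.
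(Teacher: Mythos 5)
Your proposal is correct and follows exactly the paper's own route: the authors likewise obtain the result by dividing the count of prime families from Theorem \ref{thm:primes_in_NR} by the total family count in \eqref{eq:number_families}, and your algebraic simplification using $\sqrt{\log r^2}=\sqrt{2\log r}$ matches theirs. Your closing remark about interpreting ``probability'' as a ratio of family counts rather than a pointwise density is a reasonable clarification consistent with the paper's framing.
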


It is worth noting that the analogous probabilities for $\mathbb{Z}$ and $\mathbb{Z}[i]$ are asymptotic to $1/\log r$ and $2/(\pi\log r)$, respectively, which indicates a greater density of primes for $\mathbb{Z}[\sqrt{2}]$ within its respective norm-region.

\section{Proof of main results}\label{sec:proof_of_main_results}
In the previous section, we have shown that, within a fixed norm region, the number of primes in $\mathbb{Z}[\sqrt{2}]$ is infinite, yet the number of families of primes, i.e., the number of $k$-norm curves containing primes, is finite. While these curves have infinitely many primes, their exponential growth renders them increasingly and negligibly sparse as our walk of linear growth rate progresses. This observation leads to our main theorem in this section, which states that for any positive integers $k$ and $r$, there is no unbounded walk in the first quadrant of $\mathbb{Z}[\sqrt{2}]$ that steps on the primes, has step length bounded by $k$, and has distance from $y = x/\sqrt{2}$ bounded by $r$.

We now introduce the following lemmas that are needed for the proof of the main theorem.

\begin{lem}\label{lem:allprimesinNC} For any norm curve $\NC(k)$ containing some prime $P=x+y\sqrt{2}$, another integer $P'$ on the same curve is also a prime if and only if $P'$ is of the form $\pm(x\pm y\sqrt{2})(1+\sqrt{2})^{2m}$ for some $m \in \mathbb{Z}$.
\end{lem}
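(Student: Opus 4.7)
The plan is to prove both directions using the multiplicativity of the norm $N(a+b\sqrt{2})=a^2-2b^2$ together with the fact that $\mathbb{Z}[\sqrt{2}]$ is a UFD. A preliminary observation, which I would establish first, is that since $N(1+\sqrt{2})=-1$, a unit $\pm(1+\sqrt{2})^n$ has norm $(-1)^n$, and this equals $1$ precisely when $n$ is even. So the units that preserve the norm are exactly those of the form $\pm(1+\sqrt{2})^{2m}$; these are the only units that can send a point on $\NC(k)$ to another point on $\NC(k)$.

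For the ``if'' direction, I would note that the conjugation map $a+b\sqrt{2}\mapsto a-b\sqrt{2}$ is a ring automorphism of $\mathbb{Z}[\sqrt{2}]$, so it sends primes to primes; in particular $\overline{P}=x-y\sqrt{2}$ is a prime with norm $k$. Multiplying $\pm P$ or $\pm\overline{P}$ by the norm-one unit $(1+\sqrt{2})^{2m}$ then yields a prime whose norm is still $k\cdot 1=k$, so the resulting element lies on $\NC(k)$.

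For the ``only if'' direction, suppose $P'$ is a prime on $\NC(k)$, so that $N(P')=k=N(P)$. Then as elements of $\mathbb{Z}[\sqrt{2}]$ we have $P'\,\overline{P'}=k=P\,\overline{P}$. Since $P$ is prime in the UFD $\mathbb{Z}[\sqrt{2}]$ and divides the product $P'\,\overline{P'}$, it must divide one of the two factors. If $P\mid P'$, then because $P'$ is also irreducible and $P$ is not a unit, $P'$ must be an associate of $P$, say $P'=uP$ with $u=\pm(1+\sqrt{2})^n$; comparing norms gives $k=N(u)\cdot k$, so $N(u)=1$, which by the preliminary observation forces $n=2m$ for some $m\in\mathbb{Z}$. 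This yields $P'=\pm(x+y\sqrt{2})(1+\sqrt{2})^{2m}$. If instead $P\mid \overline{P'}$, the same argument gives $\overline{P'}=uP$, and then conjugating both sides (and using $(1-\sqrt{2})^n=(-1)^n(1+\sqrt{2})^{-n}$ to rewrite $\overline{u}$ in the canonical unit form) produces $P'=\pm(x-y\sqrt{2})(1+\sqrt{2})^{2m'}$ for some $m'\in\mathbb{Z}$.

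The main subtlety, which is the only place real care is needed, is the case split in the reverse direction: a prime on $\NC(k)$ need not be a unit multiple of $P$ itself, because it could instead be a unit multiple of the conjugate $\overline{P}$, which sits on the same norm curve but is generally not an associate of $P$. This is exactly why both sign choices on the $y\sqrt{2}$ term appear in the statement. Handling this requires the UFD property of $\mathbb{Z}[\sqrt{2}]$ (to conclude that $P$ divides one of $P'$ or $\overline{P'}$); once the correct case is identified, the norm multiplicativity pins down the parity of the exponent on $1+\sqrt{2}$, and the lemma follows.
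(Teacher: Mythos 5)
Your proof is correct and follows essentially the same route as the paper: both directions rest on the multiplicativity of the norm, the factorization $P'\,\overline{P'}=k=P\,\overline{P}$ in the UFD $\mathbb{Z}[\sqrt{2}]$, and the observation that only even powers of $1+\sqrt{2}$ preserve the norm. The only cosmetic differences are that in the ``if'' direction you invoke conjugation being a ring automorphism where the paper checks primality of $\overline{P}$ case-by-case via the classification of primes in $\mathbb{Z}[\sqrt{2}]$, and in the ``only if'' direction you argue directly with an explicit case split ($P\mid P'$ versus $P\mid\overline{P'}$) where the paper argues by contradiction; your version is, if anything, the more carefully executed of the two.
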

\begin{proof}
($\leftarrow$) If $P'$ can be written as $\pm(x+ y\sqrt{2})(1+\sqrt{2})^{2m}$, then $P'$ is a prime as it is an associate of $x+y\sqrt{2}$. Thus, we only need to show that any conjugate of a prime is also a prime, i.e., $x-y\sqrt{2}$ is a prime. Let us denote the conjugate of $a+b\sqrt{2}$ by $\overline{a+b\sqrt{2}}$. Notice that \begin{equation}\label{eq:conjugate}
     \overline{(a+b\sqrt{2})(c+d\sqrt{2})}= (\overline{a+b\sqrt{2}})(\overline{c+d\sqrt{2}}).
\end{equation}
Hence, the conjugate of a product of elements in $\mathbb{Z}[\sqrt{2}]$ is the product of the conjugate of each element.

From Theorem \ref{thm:sqrt2primes}, we know that any prime $P$ can be written as $\pm Q(1+\sqrt{2})^m$, $m \in \mathbb{Z}$, where $Q$ is either $\sqrt{2}$, a real prime $\equiv 3,5$ (mod $8$), or $x+y\sqrt{2}$ where $x^2-2y^2$ is a real prime $\equiv 1,7$ (mod $8$). Then we can easily check that $\overline{Q}$ is also a prime for each case. Hence, by \eqref{eq:conjugate}, $$\overline{P} = \pm \overline{Q(1+\sqrt{2})^{k}} = \pm\overline{Q}\overline{(1+\sqrt{2})^{k}} = \pm\overline{Q}(-1)^k{(1+\sqrt{2})^{-k}},$$ which is an associate of the prime $\overline{Q}$. Therefore, $\overline{P}$ is also a prime.

($\rightarrow$) Suppose that $P' = x'+y'\sqrt{2}$ is another prime on the same curve but not of the form $\pm(x\pm y\sqrt{2})(1+\sqrt{2})^{2m}$. Since both $P$ and $P'$ are on the same norm curve, we have \begin{equation}\label{eq:primesamenorm}
N(P') = (x'+y'\sqrt{2})(x'-y'\sqrt{2}) = (x+y\sqrt{2})(x-y\sqrt{2}) = N(P).    
\end{equation} By definition of primes in a UFD, both $x+y\sqrt{2}$ and $x-y\sqrt{2}$ are divisible only by its associates and the units of the UFD, so $x'+y'\sqrt{2}$ is either a unit or an associate of $x^2-2y^2$. Either case implies that one of $x'+y'\sqrt{2}$ and $x'-y'\sqrt{2}$ must be a unit.  Thus, $|N(P')| = |N(x'+y'\sqrt{2})| = |N(x'-y'\sqrt{2})| = 1$, a contradiction because $|N(P)| \neq 1$. 
\end{proof}

Using the same argument as in the forward direction proof of Lemma \ref{lem:allprimesinNC}, we can show that if $P = x+y\sqrt{2}$ is a prime and $Q = x'+y'\sqrt{2}$ is a $\mathbb{Z}[\sqrt{2}]$ integer with the same norm, then $Q$ is also a prime. This can be shown by considering \eqref{eq:primesamenorm} and noticing that neither $x'+y'\sqrt{2}$ nor $x'-y'\sqrt{2}$ can be a unit. Then each of them must be an associate of $P$ or of the conjugate of $P$. Therefore, \textit{if there is a prime on a $k$-norm curve, every integer on that curve is also a prime.}

\begin{lem}\label{lem:two_primes} If $P= x+y\sqrt{2}$ is a prime in $\mathbb{Z}[\sqrt{2}]$ where $x, y \in \mathbb{N}$, then there is at most one prime between $P$ and $P' = P(1+\sqrt{2})^2 = (3x+4y) +(2x+3y)\sqrt{2}$, which are both on the same norm curve.
\end{lem}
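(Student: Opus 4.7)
The plan is to parameterize the branch of the norm curve $\NC(k)$ (where $k := N(P) = x^2 - 2y^2$) containing $P$, so that the group action of multiplication by the unit $\omega := (1+\sqrt{2})^2 = 3 + 2\sqrt{2}$ becomes a uniform translation in this parameter. Concretely, on the branch containing $P$ one can write each point $a + b\sqrt{2}$ in the form $\varepsilon\sqrt{|k|}\,e^{t}$ for $t \in \mathbb{R}$ and a fixed sign $\varepsilon \in \{\pm 1\}$ determined by the branch. Under this identification, multiplication by $\omega$ becomes the shift $t \mapsto t + \tau$ with $\tau := \ln \omega = 2\log(1+\sqrt{2})$, and the involution induced by conjugation (together with any sign needed to return to the chosen branch) becomes $t \mapsto -t$. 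In particular, $P$ has some parameter $t_0 > 0$ and $P'$ has parameter $t_0 + \tau$, and both live on the same branch since $(3x+4y, 2x+3y)$ is again in the first quadrant.

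By Lemma \ref{lem:allprimesinNC}, every prime on $\NC(k)$ is of the form $\pm(x\pm y\sqrt{2})\omega^m$ for some $m \in \mathbb{Z}$. Among these four orbits, only two meet the branch containing $P$: the orbit of $P$ itself, appearing at parameters $t_0 + m\tau$, and the orbit of a conjugate-associate $P^\ast$ (namely $\bar P$ when $k>0$ and $-\bar P$ when $k<0$, in either case having parameter $-t_0$ on the chosen branch), appearing at parameters $-t_0 + m\tau$. The remaining two orbits live on the opposite branch of the hyperbola and so contribute nothing to the arc from $P$ to $P'$.

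To finish, count primes with parameter strictly inside $(t_0, t_0 + \tau)$. The orbit of $P$ contributes none, because consecutive members are exactly $\tau$ apart and the endpoints $P, P'$ themselves are excluded. The orbit of $P^\ast$ places a prime at $-t_0 + m\tau$, which lies in $(t_0, t_0 + \tau)$ if and only if
\[
\frac{2t_0}{\tau} \;<\; m \;<\; \frac{2t_0}{\tau} + 1.
\]
Since this is an open interval of length exactly $1$, it contains at most one integer $m$, contributing at most one additional prime. Therefore the arc strictly between $P$ and $P'$ carries at most one prime, as claimed.

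I expect the principal subtlety to be the careful bookkeeping of signs and branches: identifying exactly which two of the four orbits from Lemma \ref{lem:allprimesinNC} remain on the branch shared by $P$ and $P'$ (which depends on the sign of $k$), and confirming that ``between $P$ and $P'$ on the same norm curve'' naturally refers to the position along the common branch captured by the parameter $t$, rather than Euclidean distance in the plane.
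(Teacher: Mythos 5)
Your proof is correct, and it takes a genuinely different route from the paper's. The paper argues entirely in integer coordinates: it locates the element of smallest Euclidean norm on the relevant branch, tracks the recursion $x_{m+1}=3x_m+4y_m$, $y_{m+1}=2x_m+3y_m$ to determine which of the four orbits $\pm(x\pm y\sqrt{2})(1+\sqrt{2})^{2m}$ lie on which half-branch, and then excludes a second intermediate prime by a pair of explicit coordinate inequalities showing that the next conjugate point would already lie beyond $P'$. You instead push everything through the real embedding $a+b\sqrt{2}\mapsto a+b\sqrt{2}\in\mathbb{R}$, under which the branch is parameterized logarithmically, multiplication by $(1+\sqrt{2})^2$ becomes translation by the regulator $\tau$, and conjugation (up to sign) becomes $t\mapsto -t$; the count then collapses to the fact that an open interval of length $1$ contains at most one integer. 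Both proofs rest on the same structural input, namely Lemma \ref{lem:allprimesinNC}, which reduces the primes on $\NC(k)$ to two orbits per branch, but your version is shorter, makes the interleaving of the two orbits transparent, and automatically covers the degenerate case where $P$ is an associate of its conjugate (e.g., $P$ an associate of $\sqrt{2}$), where the two orbits coincide and the interior count is zero. The only point I would ask you to make explicit is that $t$ is a monotone parameter along the branch (for $k>0$ the right branch is $\bigl(\sqrt{k}\cosh t,\ \sqrt{k}\sinh t/\sqrt{2}\bigr)$, and similarly for $k<0$), so that ``between $P$ and $P'$ on the norm curve'' is indeed the parameter interval $(t_0,t_0+\tau)$; with that noted, your sign and branch bookkeeping, including the identification of $P^{\ast}$ as $\bar P$ or $-\bar P$ according to the sign of $k$, is exactly right.
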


\begin{proof}
Consider the norm curve $\NC(x^2-2y^2)$ and, without loss of generality, suppose that $x^2-2y^2 > 0$, i.e., we examine the black curves in Figure \ref{fig:position_primes}. A similar proof can also be achieved for when $x^2-2y^2 < 0$, i.e., the blue curves in the same figure.

We first let $S = \{x+y\sqrt{2} \in \mathbb{Z}[\sqrt{2}] | x,y >0 \} \cap \NC(x^2-2y^2)$, which is the set of all $\mathbb{Z}[\sqrt{2}]$ integers on the norm curve $\NC(x^2-2y^2)$ in the first quadrant. There must be a prime $P_0 = x_0 +y_0\sqrt{2}$ in the set $S$ where $x_0, y_0 >0$ with the smallest Euclidean norm as the Euclidean norm of each element in $S$ is of the form $\sqrt{M}$, where $M \in \mathbb{N}$.

\begin{figure}[ht]
    \centering
    \includegraphics[width = 0.50\textwidth]{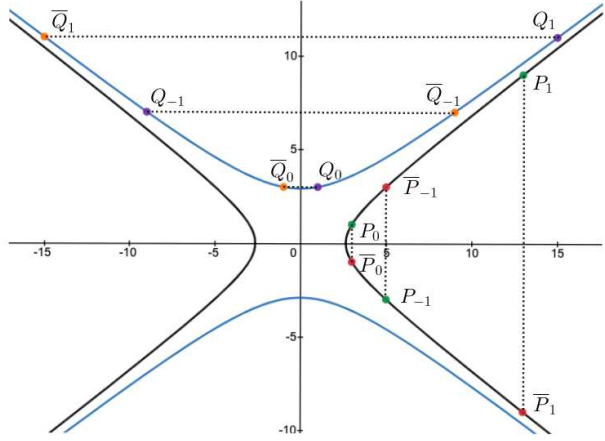}
    \caption{Positions of primes on the same norm curves.}
    \label{fig:position_primes}
\end{figure}

For convenience, we let $P_k$ denote the prime $x_k +y_k\sqrt{2} = (x_0 +y_0\sqrt{2})(1+\sqrt{2})^{2k}$ and $\overline{P}_k$ denote its conjugate. Then, from Lemma \ref{lem:allprimesinNC}, each prime on the curve $\NC(x^2-2y^2)$ is either $P_k$, $-P_k$, $\overline{P}_k$, or $-\overline{P}_k$ for some $k \in \mathbb{Z}$. We also note that since $\NC(x^2-2y^2)$ is symmetric about the $y$-axis, the $x$-coordinate of all primes on the right curve of $\NC(x^2-2y^2)$ must be positive.

We now show that all $P_k$'s and $\overline{P}_k$'s are on the right curve of $\NC(x^2-2y^2)$, hence all $-P_k$'s and $-\overline{P}_k$'s are on the left. For any $P_k = x_k +y_k \sqrt{2}$ where $x_k, y_k >0$, we have that $$P_{k+1} = P_k(1+\sqrt{2})^2 = (3x_k+4y_k)+(2x_k+3y_k)\sqrt{2}.$$
Since $x_k, y_k >0$, we obtain that $3x_k+4y_k > x_k$ and $2x_k +3y_k>y_k$. Hence, $x_{k+1} > x_k > 0$ and $y_{k+1} > y_k > 0$ for any $k \in \mathbb{N}$. Therefore, all $P_k$'s for $k \in \mathbb{N}$ are on the right curve.

Next, we consider $P_{-k}$ where $k>0$. In particular, we have $$P_{-1} = x_{-1}+y_{-1}\sqrt{2} = P_0(1+\sqrt{2})^{-2}  = (3x_0-4y_0)+(3y_0-2x_0)\sqrt{2}.$$
Since we consider the norm curve $x^2-2y^2 >0$, we have $y_0 < x_0/\sqrt{2}$. Hence, $x_{-1} = 3x_0-4y_0>3x_0-2\sqrt{2}x_0 = (3-2\sqrt{2})x_0 >0$, so $P_{-1}$ is still on the right curve of $\NC(x^2-2y^2)$. If $P_{-1}$ is on the upper half-curve, we would have $x_{-1} > x_0$ and $y_{-1} > y_0$ because the graph is increasing and $P_0$ has the smallest Euclidean norm with $x_0, y_0 >0$. However, this is impossible because we would have $ y_{-1} = 3y_0 - 2x_0 > y_0$ implying $y_0 > x_0$. Hence, $y_{-1} < 0$, i.e., $P_{-1}$ is on the lower half-curve. Similarly, we can show that $x_{-k-1} > x_{-k}$ and $y_{-k-1} < y_{-k}$ for $k \ge 1$.

Therefore, all $P_k$'s where $k \in \mathbb{Z}$ lie on the right curve of $\NC(x^2-2y^2)$, implying that all $\overline{P}_k$'s lie on the right norm curve while $-P_k$'s and $-\overline{P}_k$'s lie on the left curve. In particular, $P_k$'s and $\overline{P}_{-k-1}$'s for $k \ge 0$ are all primes on the \textit{right upper} half-curve, and their locations are shown in Figure \ref{fig:position_primes}.

Lastly, we show that there is at most one prime between $P$ and $P'$. We note that $P$ is either $P_m$ or $\overline{P}_m$ for some $m \in \mathbb{Z}$. Since these two cases have similar proofs, we only show the prior one. Suppose that there are at least two primes between $P= P_m$ and $P' = P_{m+1}$, which are both on the upper half-curve, i.e., $m\ge 0$. Then they must be $\overline{P}_{-n}$ and $\overline{P}_{-n-1}$ for some $n > 0$. However, we know that $x_m < x_{-n}$, $y_m <-y_{-n}$,
$x_{m+1} = 3x_m + 4y_m$, and $y_{m+1} = 3y_m+2x_m$.
Hence, it follows that
$$ x_{-n-1} = 3x_{-n} - 4y_{-n} > 3x_m +4y_m = x_{m+1},$$
and 
$$-y_{-n-1} = -3y_{-n}+2x_{-n} > 3y_m+2x_m = y_{m+1}.$$

Therefore, $\overline{P}_{-n-1} =  x_{-n-1} - y_{-n-1}\sqrt{2}$ is not between $P_m$ and $P_{m+1}$, a contradiction. Thus, there can be at most one prime between $P$ and $P'$.
\end{proof}

We are now ready to prove the main theorem.

\begin{thm} \label{thm:main_theorem}
For any positive integers $k$ and $r$, there is no unbounded walk in the first quadrant of $\mathbb{Z}[\sqrt{2}]$ that steps on the primes, has step length bounded by $k$, and has distance from $y = x/\sqrt{2}$ bounded by $r$.
\end{thm}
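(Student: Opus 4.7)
The plan is to fix $k$ and $r$, partition the strip $|y-x/\sqrt{2}|\le r$ into geometric annular slabs $N\le x\le 2N$, and show that for all sufficiently large $N$ (depending only on $k$ and $r$), the slab contains too few $\mathbb{Z}[\sqrt{2}]$ primes to support a walk of step size at most $k$ crossing from $x=N$ to $x=2N$. Since such a crossing requires at least $N/k$ distinct primes in the slab, an upper bound of the form $O(rN/\log(rN))$ on the prime count will force the walk to terminate at some bounded $x$-coordinate.

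The first ingredient is a norm bound for points in the strip. For $(x,y)$ in the strip with $x$ large, factor
\[
|x^2-2y^2| \;=\; |x-\sqrt{2}y|\cdot|x+\sqrt{2}y|.
\]
The first factor is $O(r)$ by hypothesis, and the second factor is $O(x)$ in the first quadrant. Hence $|x^2-2y^2|=O(rx)$, so every $\mathbb{Z}[\sqrt{2}]$-integer in the strip with $x\le 2N$ lies on some norm curve $\NC(m)$ with $|m|\le C_1 rN$ for an absolute constant $C_1$.

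Next, by Theorem \ref{thm:primes_in_NR} applied with $R^2=C_1 rN$, the number of prime families with $|m|\le C_1 rN$ is asymptotic to $C_1 rN/\log(C_1 rN)$. I would then bound the contribution of a single family using Lemma \ref{lem:two_primes}: consecutive primes on a fixed norm curve are related by multiplication by $(1+\sqrt{2})^2>5$, so their moduli form a geometric progression and at most $O(1)$ of them can have $x$-coordinate lying in $[N,2N]$ (one from the sequence $P_j$ and possibly one from the interleaved conjugate sequence $\overline{P}_{-j-1}$). Multiplying these two estimates, the total number of primes in the strip-slab region is $O(rN/\log(rN))$. The walk constraint $N/k\le O(rN/\log(rN))$ then forces $\log(rN)=O(rk)$, which fails for $N$ exceeding a threshold $N_0=N_0(k,r)$, giving the desired contradiction.

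The main obstacle will be pinning down the "at most $O(1)$ primes per family per slab" claim with full rigor, since the two interleaved sequences on a given curve (associates of $P_0$ and of $\overline{P}_0$) must both be tracked, including the point at which the conjugate sequence actually enters the upper half of the first quadrant. A secondary bookkeeping issue is the treatment of small-$|m|$ families, whose curves only enter the strip once $x$ is large enough; for each fixed $N$ this contributes only finitely many primes, so it does not affect the asymptotic bound, but it must be acknowledged to make the counting honest.
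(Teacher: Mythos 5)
Your proposal is correct and follows essentially the same strategy as the paper's proof: both bound the norms of curves meeting a length-$\Theta(x)$ piece of the strip by $O(rx)$, invoke Theorem \ref{thm:primes_in_NR} to count the prime families there, use the $(1+\sqrt{2})^2$ growth of associates (Lemma \ref{lem:two_primes}) to get $O(1)$ primes per family in that piece, and compare with the $\Omega(x/k)$ steps needed to cross it. Your dyadic slabs $N\le x\le 2N$ and the factorization $|x^2-2y^2|=|x-\sqrt{2}y|\cdot|x+\sqrt{2}y|$ are cosmetic variants of the paper's rectangle $APDE$ and its vertical-gap computation.
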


\begin{proof} Our goal is to construct a bounded region where every prime within it is of at most distance $r$ from the line $y = x/\sqrt{2}$ and then show that the number of primes in that region is clearly less than the minimum number of steps needed to walk across it.

Assume that any possible walk to infinity remains within some bounded distance $r$ from $y = x/\sqrt{2}$ and that we take steps of length at most $k$. Let $S=(p,q)$ be some sufficiently large prime (in terms of its Euclidean norm) that belongs to the walk, and, without loss of generality, $S$ is below the asymptote. We then construct the line perpendicular to $y = x/\sqrt{2}$ passing through $S$. Let this line intersect $y = x/\sqrt{2}$ at $P=(x,y)$ as in Figure \ref{fig:constructregion}. Then we construct $A$ and $B$ on the line perpendicular to the asymptote such that $AP = BP = r$. By construction, $S$ is between $A$ and $P$.

\begin{figure}[ht]
    \centering
    \includegraphics[width = 0.50\textwidth]{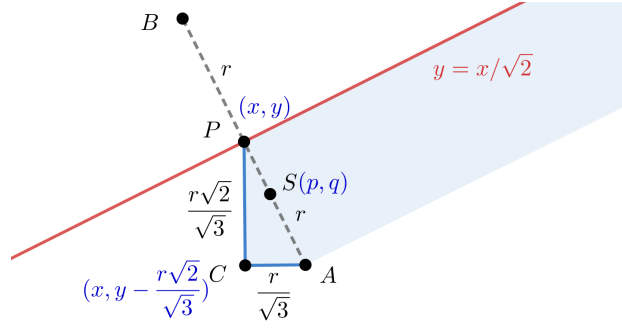}
    \caption{Constructing the desired bounded area.}
    \label{fig:constructregion}
\end{figure}

Now, we construct $C$ to be the point where the vertical line passing through $P$ intersects the horizontal line passing through $A$. It follows that $CA = \frac{r}{\sqrt{3}}$ and $CP = \frac{r\sqrt{2}}{\sqrt{3}}$. Hence, $C = \left(x, y- \frac{r\sqrt{2}}{\sqrt{3}}\right)$. Note that now, for any point in the shaded area in Figure \ref{fig:constructregion}, the point is always above and to the right of $C$.

We now consider any prime $K = (m,n)$ in the shaded area and $K'$ be the coordinates that represent the prime $(m+n\sqrt{2})(1+\sqrt{2})^2$, i.e., $K' = (3m+4n, 2m+3n)$. Then $$d(K,K') = \sqrt{(2m+4n)^2+(2m+2n)^2}.$$
On the other hand, if we define $C'$ similarly to $K'$, i.e., $$C' = \left(3x + 4\left(y- \frac{r\sqrt{2}}{\sqrt{3}}\right), 2x+ 3\left(y- \frac{r\sqrt{2}}{\sqrt{3}}\right)\right),$$we then have \begin{equation}
    \label{eq:d(C,C')} d(C,C') = \sqrt{\left(2x+4\left(y- \frac{r\sqrt{2}}{\sqrt{3}}\right)\right)^2+\left(2x+2\left(y- \frac{r\sqrt{2}}{\sqrt{3}}\right)\right)^2}.\end{equation}
Since $m > x$ and $n> y- \frac{r\sqrt{2}}{\sqrt{3}}$ because $K$ is above and to the right of $C$, we have that $d(K,K') > d(C,C')$, for any $K$ in the shaded region in Figure \ref{fig:constructregion}.

\begin{figure}[ht]
    \centering
    \includegraphics[width = 0.45\textwidth]{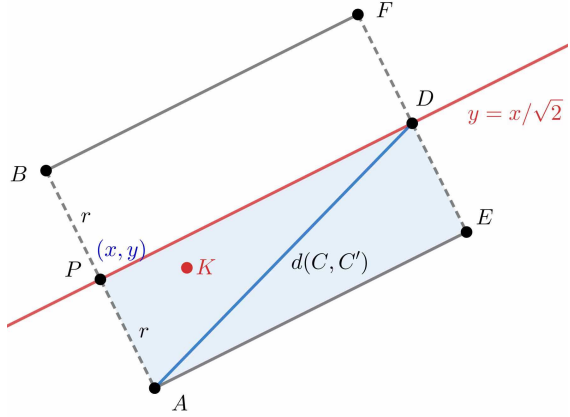}
    \caption{Constructing the bounded region $APDE$.}
    \label{fig:APDE}
\end{figure}

Next, we construct $D$ on the asymptote such that $AD = d(C,C')$ and let $E$ be the point such that $\overline{ED}//\overline{AP}$ and $ED = r$ as in Figure \ref{fig:APDE}. Then, in the rectangle $APDE$, the greatest distance between any 2 points within the region is $d(C,C')$. As we have observed that $d(K,K') > d(C,C')$ for any $K$ in the shaded region in Figure \ref{fig:constructregion}, the same is true for any $K$ in the rectangle $APDE$. Equivalently, $d(K,K') > d(C,C')$ for any prime $K = m+n\sqrt{2}$ in $APDE$.

In other words, within the rectangle $APDE$, if we step on some prime $K$, the prime $K'$ will not lie inside $APDE$. This means \textit{there are at most 2 primes on the same norm curve within $APDE$}, namely $K$ and the prime between $K$ and $K'$ if there is any, according to Lemma \ref{lem:two_primes}.

With the above fact, let us now estimate the minimum number of steps needed to walk across $ABFE$, where $F$ is the reflection of $E$ about the asymptote as in Figure \ref{fig:APDE}, and the number of possible primes we can step on in the same area. Let $g(x)$ be the vertical gap between the asymptote $y = x/\sqrt{2}$ and the curve $x^2-2y^2 =c$ in the first quadrant, which can be rewritten as $y = {\sqrt{x^2-c}}/{\sqrt{2}}$. Then $g(x)$ can be computed as
$$g(x) = \frac{x-\sqrt{x^2-c}}{\sqrt{2}} = \frac{c}{\sqrt{2}(x+\sqrt{x^2-c})} \ge \frac{c}{2\sqrt{2} x}.$$

\begin{figure}[ht]
    \centering
    \includegraphics[width = 0.45\textwidth]{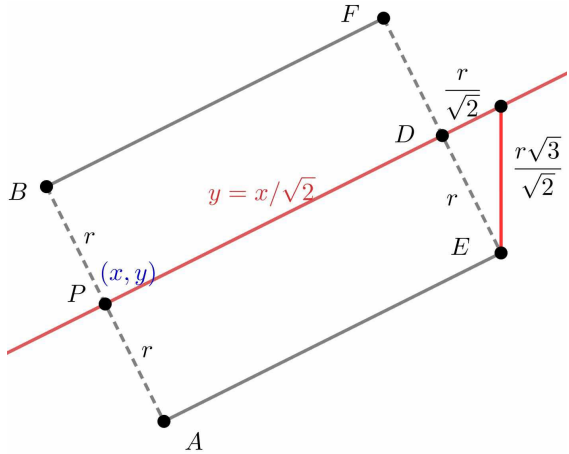}
    \caption{The vertical length from $E$ to the asymptote $y=x/\sqrt{2}$.}
    \label{fig:verticallength}
\end{figure}

To find the largest $c$ such that the curve $y = \sqrt{x^2-c}/{\sqrt{2}}$ stays within $APDE$, we need $g(x)$ not to exceed the vertical length from $E$ to the asympote, which is $r\sqrt{3}/\sqrt{2}$ as shown in Figure \ref{fig:verticallength}. Thus $$ \frac{c}{2\sqrt{2}x} \le g(x) \le \frac{r\sqrt{3}}{\sqrt{2}},$$
so $c \le 2\sqrt{3} rx$. 

However, not all of the norm curves within the norm region $\NR(2\sqrt{3}rx)$ contain primes. From Theorem \ref{thm:primes_in_NR}, within $\NR(2\sqrt{3}rx)$, the number of norm curves that contain primes is asymptotic to
$$\frac{2\sqrt{3}rx}{2\log\sqrt{2\sqrt{3}rx}} = O\left(\frac{x}{\log x}\right).$$ 
In particular, as we have observed that there are at most 2 primes on the same norm curve within $APDE$ and hence $ABFE$ by symmetry, the number of primes we can step on is at most twice of the estimate above, which is still $O\left(x/{\log x}\right)$.

On the other hand, we need to take as few as $\lceil PD/k \rceil$ steps to go from the front $\overline{AB}$ to the front $\overline{EF}$. Thus, from \eqref{eq:d(C,C')}, the number of steps needed is bounded below by
\begin{align*}
    \frac{PD}{k}  = \frac{\sqrt{AD^2-AP^2}}{k}
    &= \frac{\sqrt{d(C,C')^2-r^2}}{k}\\
    &= \frac{\sqrt{\left(2x+4\left(y- \frac{r\sqrt{2}}{\sqrt{3}}\right)\right)^2+\left(2x+2\left(y- \frac{r\sqrt{2}}{\sqrt{3}}\right)\right)^2 -r^2}}{k}.
\end{align*}
Note that $P=(x,y)$ is on $y = x/\sqrt{2}$, so replacing $y$ with $x/\sqrt{2}$ gives us that the number of steps needed is $\Omega(x)$, whereas the number of primes within $ABFE$ is $O(x/\log x)$. Therefore, for a sufficiently large $S$ hence $x$, the number of primes we can step on within $ABFE$ is much less than the number of steps needed to walk across the same region.
\end{proof}

By the symmetry of primes in $\mathbb{Z}[\sqrt{2}]$, the theorem is also true for the third quadrant and the second and fourth quadrants when considering the asymptote $y=-x/\sqrt{2}$ instead.

\section{Visualizing prime walks}\label{sec:visualizing_prime_walks}
In this section, we present some evidence of why every prime walk in $\mathbb{Z}[\sqrt{2}]$ with steps of bounded length is likely to stay close to the asymptote. We construct a random walk on primes in $\mathbb{Z}[\sqrt{2}]$ with steps of length at most $k$ by starting at $\sqrt{2}$ and searching randomly for the next prime within distance $k$ from the previous one. We also require the next prime has a larger Euclidean norm than the previous one to let the walk always spread further away from the origin. Figure \ref{fig:random_walk_at_origin} shows a collection of such random walks with step size $\sqrt{8}$ starting from $\sqrt{2}$. It is interesting to discover that nearly all of the longest random walks are close to the asymptote $y= \pm x/\sqrt{2}$. When the starting point is chosen randomly and not necessarily $\sqrt{2}$, the resulting random walks also have the same property. In Figure \ref{fig:randome_walk_not_at_origin}, we have a collection of random walks with step size $\sqrt{8}$ starting from $13+15\sqrt{2}$. Again, the longest walk is very close to the asymptote $y= \pm x/\sqrt{2}$.

\begin{figure}[ht]
    \centering
    \includegraphics[width=0.5\textwidth]{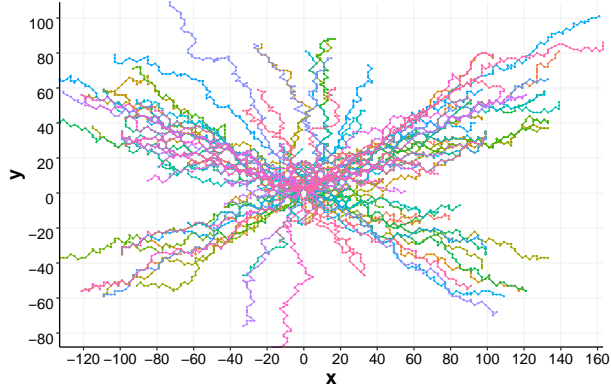}
    \caption{Random walks on primes with $k=\sqrt{8}$ starting at $\sqrt{2}$.}
    \label{fig:random_walk_at_origin}
\end{figure}

\begin{figure}[ht]
    \centering
    \includegraphics[width=0.5\textwidth]{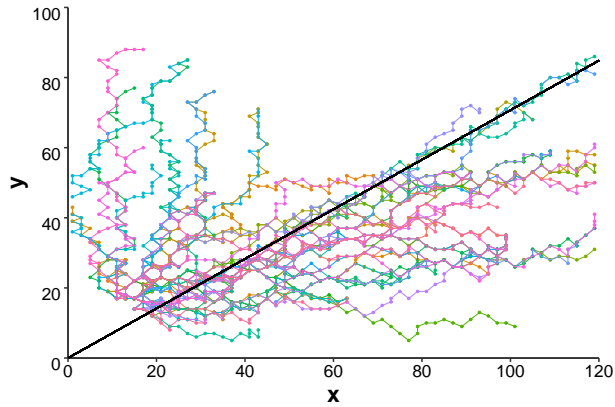}
    \caption{Random walks on primes with $k=\sqrt{8}$ starting at $13+15\sqrt{2}$ with the asymptote $y=x/\sqrt{2}$ (black).}
     \label{fig:randome_walk_not_at_origin}
\end{figure}

\begin{figure}[ht]
    \centering
    \includegraphics[width=0.4\textwidth]{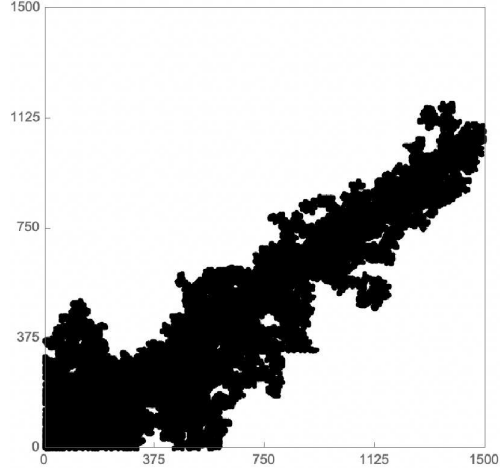}
    \caption{All prime walks with $k = \sqrt{8}$ from the origin in the first quadrant.}
    \label{fig:primewalksqrt8}
\end{figure}

We now describe the algorithm used to visualize all possible prime walks for step size $k=\sqrt{8}$ starting from the origin. The algorithm consists of four main steps. The first step is to identify all primes in a disk of radius $n$ for some large $n$ by checking whether each element in the disk satisfies Theorem \ref{thm:sqrt2primes}. Then, for each prime $p$, we find its $k$-neighbors which are primes within the distance $k$ from $p$. Next, we form the road network connecting all primes and their $k$-neighbors, and find the connected component of $\sqrt{2}$, which is the prime closest to the origin. Finally, we can calculate all possible prime walks (ideally, if not limited by the running time) with steps of length at most $k$ starting from the origin.

When $k$ is $\sqrt{8}$, Figure \ref{fig:primewalksqrt8} shows that the prime walks can go as far as $x =1500$. Note that in reality the walks do not end there, but due to the limitation of our algorithm, the running time increases dramatically as the walks progress, so we need to stop the calculation after a certain point. Still, there is a clear trend that the longest walk is near the asymptote $y = x/\sqrt{2}$, so the condition on the bounded distance from $y=x/\sqrt{2}$ in Theorem \ref{thm:main_theorem} may not be necessary. 

\begin{conj}\label{conj:main}
For any positive integers $k$, there is no unbounded walk in $\mathbb{Z}[\sqrt{2}]$ that steps only on the primes and has step length bounded by $k$.
\end{conj}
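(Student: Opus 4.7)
The plan is to extend Theorem \ref{thm:main_theorem} by removing its boundedness hypothesis on the distance from the asymptote. The heuristic reason to believe the conjecture, supported by the experiments in Section \ref{sec:visualizing_prime_walks}, is that primes in $\mathbb{Z}[\sqrt{2}]$ away from the asymptotes are too sparse to sustain a walk with bounded step size. My strategy is a two-step argument: first, refine Theorem \ref{thm:main_theorem} so that the width of the admissible strip may grow with the walker's horizontal position $x$; second, show that any prime walk must in fact stay inside such a slowly-growing strip.

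For the first step, I would revisit the proof of Theorem \ref{thm:main_theorem} with the strip width $r$ replaced by a function $r(x)$. The construction of the rectangle $ABFE$ and the key estimate $d(C,C') = \Theta(x)$ carry over unchanged, so the number of steps required to traverse the rectangle is still $\Omega(x/k)$. Applying Theorem \ref{thm:primes_in_NR} to $\NR(2\sqrt{3}\,r(x)\,x)$ bounds the number of prime families inside the rectangle by $O\!\left(r(x)\,x/\log(r(x)x)\right)$, so Lemma \ref{lem:two_primes} multiplies this by at most $2$ to bound the number of actual primes one can step on. The counting contradiction $\Omega(x/k) > O\!\left(r(x)x/\log(r(x)x)\right)$ persists as long as $k\cdot r(x) = o(\log(r(x)x))$, which is satisfied for any $r(x) = o(\log x/k)$. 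This yields a strengthened Theorem \ref{thm:main_theorem}: no walk with step size at most $k$ can remain within perpendicular distance $r(x) = o(\log x/k)$ of the asymptote.

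For the second step, the remaining task is to show that every prime walk must in fact stay within such a logarithmic strip from some point on. The heuristic estimate comes from Theorem \ref{thm:prob_prime_in_NR}: at a point at perpendicular distance $\rho$ from the asymptote and horizontal coordinate $x$, the walker sits inside $\NR(R^2)$ with $R^2 = \Theta(\rho x)$, so the probability that a generic lattice point belongs to a prime family is $\Theta(1/\sqrt{\log(\rho x)})$. A single step of length at most $k$ can reach only $O(k^2)$ lattice points and can change the norm by only $O(kx)$, so it can access only $O(kx/\log(kx))$ prime-bearing norm curves, and on each of these the primes are exponentially sparse by Lemma \ref{lem:two_primes}. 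Combining these observations, the expected number of primes reachable in one step should tend to zero once $\rho$ grows faster than $\log x/k$, producing a moat.

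The main obstacle is precisely this second step: turning the probabilistic vanishing into a deterministic prime-free moat requires effective, uniform versions of Theorem \ref{thm:Bernaystheorem} and of Theorem \ref{thm:prob_prime_in_NR}, with explicit control over which norm curves carry primes and where those primes lie on the curves. This is directly analogous to the obstruction in the classical Gaussian moat problem, which remains open for essentially this reason; quantitative bounds of the required strength do not seem to be in the current literature. Assuming such quantitative input is granted, the two steps combine by a straightforward bootstrap: any hypothetical infinite walk is forced into an arbitrarily thin strip by the moat argument, and no walk can survive inside such a strip by the refined version of Theorem \ref{thm:main_theorem}, yielding the desired contradiction.
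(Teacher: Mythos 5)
The statement you are trying to prove is stated in the paper as Conjecture \ref{conj:main}, not as a theorem: the paper offers no proof of it, only the computational evidence of Section \ref{sec:visualizing_prime_walks} (random walks and moat calculations suggesting that long walks hug the asymptote) and the conditional result Theorem \ref{thm:main_theorem}. So there is no proof in the paper to match yours against, and any complete argument you gave would go beyond the paper. Your proposal does not close that gap, and to your credit you say so explicitly.

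On the merits of the two halves: your first step is a genuine and plausibly provable strengthening of Theorem \ref{thm:main_theorem}. The counting in the paper's proof does tolerate a slowly growing width, since the traversal cost of the rectangle $ABFE$ stays $\Omega(x/k)$ while the prime count grows only like $O\!\left(r(x)x/\log(r(x)x)\right)$, so a strip of width $o(\log x/k)$ still yields the contradiction (modulo routine care about the uniformity of the asymptotic in Theorem \ref{thm:primes_in_NR} and about what ``distance at most $r(x)$'' means along a rectangle of length $\Theta(x)$). The fatal gap is your second step. Theorem \ref{thm:prob_prime_in_NR} and Theorem \ref{thm:Bernaystheorem} are density statements; they bound how many prime-bearing norm curves exist on average, but they say nothing about the absence of primes in any \emph{specific} annular region far from the asymptote, and an infinite walk only needs one favorable configuration at each step. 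Converting ``the expected number of reachable primes tends to zero'' into ``there is an actual prime-free moat'' is precisely the obstruction that keeps the Gaussian moat problem open, and no amount of bootstrapping between your two steps can substitute for it: the contradiction you describe only arises \emph{after} the walk has been confined to the thin strip, and nothing you have proved confines it. The correct status of your argument is therefore a heuristic in support of the conjecture, consistent with how the paper itself presents the question, rather than a proof or even a proof strategy that is complete in outline.
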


\section{Conclusion}\label{sec:conclusion}
By estimating the number of primes near the asymptote $y=x/\sqrt{2}$, we have shown that there is no prime walk to infinity with steps of bounded length in $\mathbb{Z}[\sqrt{2}]$ if every prime walk in $\mathbb{Z}[\sqrt{2}]$ is guaranteed to stay within some bounded distance from $y= x/\sqrt{2}$. With several moat computations, we have demonstrated that the longest prime walks tend to cluster near the asymptote, which leads to Conjecture \ref{conj:main}. One direction for future work is to prove Conjecture \ref{conj:main} and generalize this result to other quadratic integer rings that are UFDs, such as $\mathbb{Z}[\sqrt{3}]$, $\mathbb{Z}[\frac{1 + \sqrt{5}}{2}]$, or $\mathbb{Z}[\sqrt{7}]$. Figures \ref{fig:primeinQroot2} and \ref{fig:primeinQroot3} show that primes in $\mathbb{Z}[\sqrt{3}]$ also cluster near the asymptote $x^2-3y^2=0$ similar to the case of $\mathbb{Z}[\sqrt{2}]$, and hence suggest that there should exist no prime walk to infinity with steps of bounded length in $\mathbb{Z}[\sqrt{3}]$. Another direction is to study prime walks in the quadratic integer rings that are not UFDs, such as $\mathbb{Z}[\sqrt{-5}]$, since our proof of the main theorem requires the ring to be a UFD.

\begin{figure}[ht]
  \begin{subfigure}[t]{0.48\textwidth}
  \centering
    \includegraphics[width=0.80\textwidth]{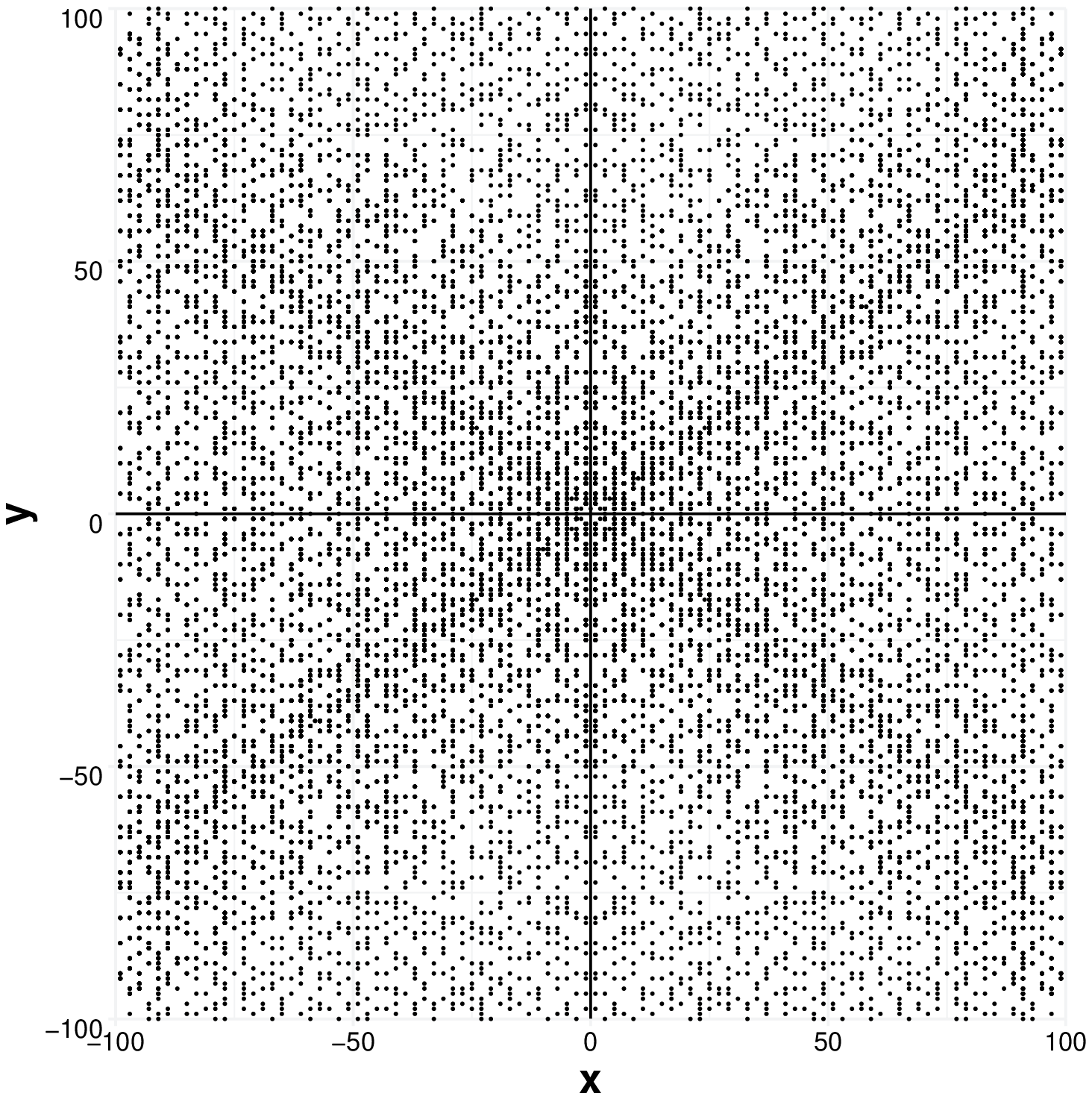}
    \caption{}
    \label{fig:primeinQroot2}
  \end{subfigure}
  \begin{subfigure}[t]{0.48\textwidth}
  \centering
    \includegraphics[width=0.80\textwidth]{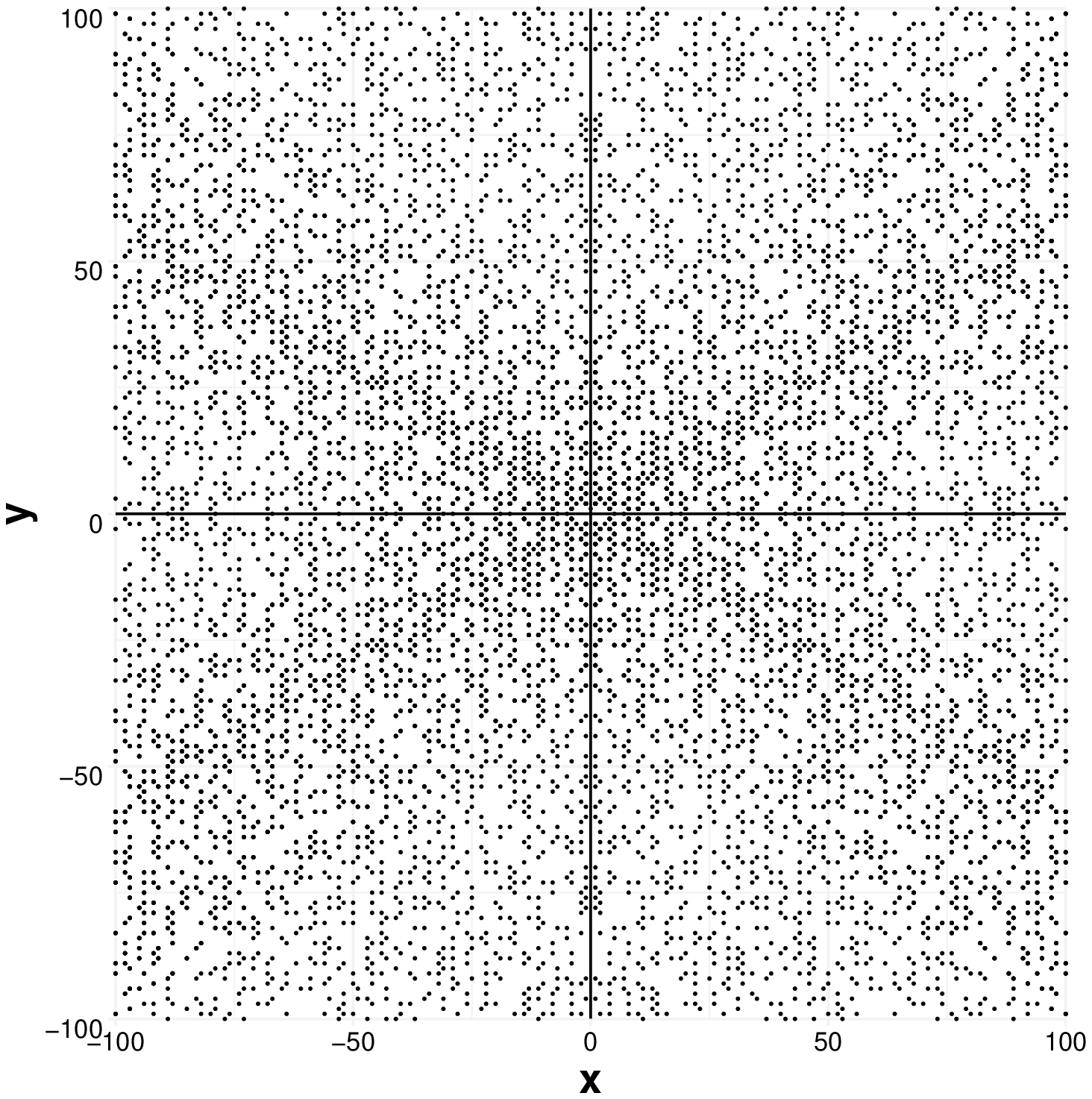}
    \caption{}
    \label{fig:primeinQroot3}
  \end{subfigure}
  \caption{Primes in $\mathbb{Z}[\sqrt{2}]$ (Figure \ref{fig:primeinQroot2}) and $\mathbb{Z}[\sqrt{3}]$ (Figure \ref{fig:primeinQroot3}).}
\end{figure}

\section{Acknowledgments}

We thank other members of the collaborative team at the Polymath REU program for productive discussions and useful comments. We would like to thank our colleagues at the Young Mathematicians Conference in August 2020 for useful comments. We would also like to extend our gratitude to the editor and referees from the \textit{Journal of Integer Sequences} for careful reading and comments on the manuscript.

\appendix

\bigskip
\hrule
\bigskip

\noindent 2010 {\it Mathematics Subject Classification}:~Primary 11A41; Secondary 11B05.

\noindent \textit{Keywords}:~walking to infinity, prime walk, $\mathbb{Z}[\sqrt{2}]$, real quadratic integer ring.

\bigskip
\hrule
\bigskip

\end{document}